\newtheorem{theorem}{Theorem}[section]
\newtheorem{proposition}{Proposition}[section]
\newtheorem{remark}{Remark}[section]
\newtheorem{lemma}{Lemma}[section]
\newtheorem{definition}{Definition}[section]
\newcommand{\RN}[1]{%
  \textup{\uppercase\expandafter{\romannumeral#1}}%
}
\title{Rate of Convergence in Periodic Homogenization for Convex Hamilton--Jacobi Equations with multiscales}
\author{Yuxi Han and Jiwoong Jang}
\keywords{Periodic homogenization, cell problems, first-order convex Hamilton-Jacobi equations, optimal rate of convergence, viscosity solutions}
\subjclass[2010]{35B27, 41A25, 35F21, 49L25}
\begin{document}
\maketitle

\begin{abstract}
We study the rate of convergence in periodic homogenization for convex Hamilton--Jacobi equations with multiscales, where the Hamiltonian $H=H(x, y, p): \mathbb{R}^n \times \mathbb{T}^n \times \mathbb{R}^n \to \mathbb{R
}$ depends on both of the spatial variable and the oscillatory variable. In particular, we show that for the Cauchy problem, the rate of convergence is $O(\sqrt{\epsilon})$ by optimal control formulas, scale separations and curve cutting techniques. We also show the rate $O(\sqrt{\epsilon})$ of homogenization for the static problem based on the same idea. Additionally, we provide examples that illustrate the rate of convergence for the Cauchy problem is optimal.
\end{abstract}

\section{Introduction}
We consider the periodic homogenization problem for convex Hamilton--Jacobi equations in the multiscale setting. For $\epsilon > 0$, let $u^\epsilon$ be the unique viscosity solution to 

\begin{equation}\label{eqn:ms}
    \left\{\begin{aligned}
    u^\epsilon_t+H \left(x, \frac{x}{\epsilon}, Du^\epsilon \right) & = 0 \quad \qquad \, \text{in } \mathbb{R}^n \times (0, \infty), \\
    u^\epsilon (x, 0) & = g(x)  \qquad \text{on } \mathbb{R}^n,
    \end{aligned}
    \right.
\end{equation}
where $g$ is a given function as the initial data and the Hamiltonian $H=H(x,y,p):\mathbb{R}^n\times\mathbb{T}^n\times\mathbb{R}^n \to \mathbb{R}$ is continuous and convex in $p$. Here, $\mathbb{T}^n=\mathbb{R}^n/\mathbb{Z}^n$ is the $n$-dimensional flat torus. It is well known that under appropriate assumptions, $u^\epsilon$ converges uniformly to the unique viscosity solution $u$ to 
\begin{equation}\label{eqn:eh}
    \left\{\begin{aligned}
    u_t+\overline{H} \left(x, Du\right) & = 0 \quad \qquad \, \text{in } \mathbb{R}^n \times (0, \infty), \\
    u (x, 0) & = g(x)  \qquad \text{on } \mathbb{R}^n,
    \end{aligned}
    \right.
\end{equation}
on $\mathbb{R}^n \times [0, T]$ for any $T>0$ as $\epsilon\to 0$, where $\overline{H}$ is the effective Hamiltonian of $H$ (see \cite{LionsPapaVara1987}). However, the optimal rate of convergence of $u^\epsilon$ to $u$ in this multiscale setting has not been studied thoroughly in the literature. In this paper, we prove that the rate of convergence is $O (\sqrt{\epsilon})$ for $t \geq \sqrt{ \epsilon}$ and $O \left(\min \left\{t, \epsilon\right\}\right)$ for $t \in \left(0, \sqrt{\epsilon}\right)$. Furthermore, examples are provided to demonstrate the optimality of this convergence rate.

\subsection{Relevant Literature}

Periodic homogenization for coercive Hamilton--Jacobi equations was first proved in \cite{LionsPapaVara1987}. Subsequently, numerous works in the literature have focused on determining the rate of convergence of the homogenization problem for Hamilton--Jacobi equations. For general nonconvex Hamiltonians with multiscales, the best known rate of convergence is $O(\epsilon^{1/3})$, which was obtained in \cite{ItaloIshii2001} by the doubling variable method and the perturbed test function method (see \cite{Evans1989,Evans1992}). For convex Hamiltonians $H=H(y, p)$ that depend only on the oscillatory variable and the momentum, the optimal rate of convergence was first studied in \cite{MitakeTranYu2019} using weak KAM theory and Aubry-Mather theory. In particular, it was proved that the lower bound of $u^\epsilon - u \geq -C \epsilon$ is optimal and the upper bound holds with additional assumptions on $H, u, g$. Recently, the optimal rate of $O(\epsilon)$ was proved in \cite{TranYu2022} using a curve cutting lemma from
metric geometry (see \cite{Burago1992}), which concludes the study in the setting of convex Hamiltonians $H=H(y, p)$ that depend only on the oscillatory variable and the momentum. Additionally, the optimal rate of $O(\epsilon)$ was obtained in \cite{Nguyen2022} for convex Hamiltonians $H=H(y,s,p)$ that also depend periodically on the time variable. For a recent study on the rate of convergence for time-fractional Hamilton--Jacobi equations with Caputo fractional derivatives, see \cite{MitakeSato2023}. We refer the reader to \cite{ItaloIshii2001,MitakeTranYu2019,TranYu2022} for further references therein.

To our best knowledge, the most closely related previous research in this area is \cite{Son2020}, where the approach in \cite{MitakeTranYu2019} was extended to attain the optimal rate of $O(\epsilon)$ in one dimension with further assumptions on $H$. In this study, we investigate this problem for dimensions $n \geq 1$ and prove that the convergence rate, in general, is $O (\sqrt{\epsilon})$ for $t \geq \sqrt{ \epsilon}$.

\subsection{Settings}
Throughout this paper, we will assume that the following conditions hold for the Hamiltonian $H : \mathbb{R}^n \times \mathbb{T}^n \times \mathbb{R}^n \to \mathbb{R}$ :

(H1) For each $R>0$, $H \in \mathrm{BUC} \left(\mathbb{R}^n \times \mathbb{T}^n \times \mathrm{B}\left(0, R\right)\right)$, where $\mathrm{BUC}\left(\mathbb{R}^n \times \mathbb{T}^n \times \mathrm{B}\left(0, R\right)\right)$ stands for the set of bounded and uniformly continuous functions on $\mathbb{R}^n \times \mathbb{T}^n \times \mathrm{B}\left(0, R\right)$.

(H2) $\lim_{\left|p\right| \to \infty}\left(\inf_{x\in \mathbb{R}^n,y \in \mathbb{T}^n}H\left(x,y,p\right)\right) = +\infty$.

(H3) For each $x \in \mathbb{R}^n$ and $y\in \mathbb{T}^n$, the map $p \mapsto H(x, y, p)$ is convex.

(H4) There exists a constant $\mathrm{Lip}(H)>0$ such that $\left|H(x_1, y, p)-H(x_2, y, p)\right| \leq \mathrm{Lip}(H) |x_1-x_2|$, for any $x_1, x_2 \in \mathbb{R}^n$, $y \in \mathbb{T}^n$, and $p \in \mathbb{R}^n$.

We also assume $g \in \mathrm{BUC}(\mathbb{R}^n) \cap \mathrm{Lip} (\mathbb{R}^n)$.

We emphasize that condition (H4) is essential for the validity of our main result (as discussed in Remark \ref{rem:cond4}). In Section \ref{sec:example}, we present an example (refer to Proposition \ref{exp:nolip}) to demonstrate that in the absence of this condition, the rate of convergence of $u^\epsilon$ to $u$ as $\epsilon$ tends to zero cannot be bounded by $O(\sqrt{\epsilon})$.

The well-posedness of the equation \eqref{eqn:ms} has already been extensively studied. The classical theory of viscosity solutions can be used to demonstrate the existence and uniqueness of solutions to \eqref{eqn:ms} (see \cite{tran_hamilton-jacobi_2021}). Moreover, the solution $u^\epsilon$ is uniformly bounded and Lipschitz, which can be expressed as follows:
\begin{equation}\label{eqn:u_prior}
\left\|u^\epsilon_t\right\|_{L^\infty\left(\mathbb{R}^n \times [0, \infty)\right)}+\left\|Du^\epsilon\right\|_{L^\infty(\mathbb{R} \times [0, \infty))} \leq C_0,\quad \forall \epsilon>0\,,
\end{equation}
where $C_0>0$ is a constant that depends only on $H$ and $\left\|Dg\right\|_{L^\infty(\mathbb{R}^n)}$.  Based on \eqref{eqn:u_prior}, we can modify $H(x, y, p)$ for $|p| > 2C_0+1$ without changing the solutions to \eqref{eqn:ms}. This modification ensures that for all $x, p \in \mathbb{R}^n$ and $y \in \mathbb{T}^n$,
\begin{equation}\label{K_0H}
    \frac{|p|^2}{2}-K_0 \leq H(x, y, p) \leq \frac{|p|^2}{2}+K_0
\end{equation}
for some constant $K_0 >0$ that depends only on $H$ and $\left\|Dg\right\|_{L^\infty(\mathbb{R}^n)}$. Consequently, for all $x, v \in \mathbb{R}^n$ and $y \in \mathbb{T}^n$,
\begin{equation}\label{K_0L}
\frac{|v|^2}{2}-K_0 \leq L(x, y, v) \leq \frac{|v|^2}{2}+K_0
\end{equation}
where $L:\mathbb{R}^n \times \mathbb{T}^n \times \mathbb{R}^n \to \mathbb{R
}$ is the Legendre transform of $H$.

Moreover, we have optimal control formulas for $u^\epsilon$ and $u$, that is,
\begin{equation}\label{eqn:ocfue}
    \begin{aligned}
        u^\epsilon (x, t)&= \inf \left\lbrace \int_0^t L\left(\gamma(s), \frac{\gamma(s)}{\epsilon} ,-\dot{\gamma}(s)\right) ds+g\left(\gamma(t)\right): \gamma\in \mathrm{AC}([0,t];\mathbb{R}^n),\ \gamma(0) = x\right\rbrace
    \end{aligned}
\end{equation}
and
\begin{equation}\label{eqn:ocfuebar}
    \begin{aligned}
        u(x, t) = \inf \left\lbrace \int_0^t \overline{L} \left( \overline{\gamma}(s), -\dot{\overline{\gamma}}(s)\right) ds+g\left(\overline{\gamma}(t)\right): \overline{\gamma}\in \mathrm{AC}([0,t];\mathbb{R}^n),\ \overline{\gamma}(0) = x\right\rbrace,
    \end{aligned}
\end{equation}
respectively. Here, $\mathrm{AC}$ denotes the class of absolutely continuous functions and $\overline{L}$ is the Legendre transform of $\overline{H}:\mathbb{R}^n \times \mathbb{R}^n \to \mathbb{R
}$.


\subsection{Main results and proof strategies}

To establish our main result, we first introduce the following notation that can be viewed as a metric between any two points $x$ and $y$ in $\mathbb{R}^n$.

\begin{definition}\label{def:notation}
Let $c,x,y\in\mathbb{R}^n,\ \epsilon>0,\ 0\leq t_1\leq t_2<+\infty$. Define
\begin{align*}
\Gamma(t_1,t_2,x,y)&:=\left\{\gamma\in\mathrm{AC}([t_1,t_2],\mathbb{R}^n):\ \gamma(t_1)=x,\ \gamma(t_2)=y\right\},\\
m^{\epsilon}(t_1,t_2,x,y)&:=\inf\left\{\int_{t_1}^{t_2}L\left(\gamma(s),\frac{\gamma(s)}{\epsilon},-\dot{\gamma}(s)\right)ds\ :\ \gamma\in\Gamma(t_1,t_2,x,y)\right\},\\
m^{\epsilon}_{c}(t_1,t_2,x,y)&:=\inf\left\{\int_{t_1}^{t_2}L\left(c,\frac{\gamma(s)}{\epsilon},-\dot{\gamma}(s)\right)ds\ :\ \gamma\in\Gamma(t_1,t_2,x,y)\right\},\\
\overline{m}(t_1,t_2,x,y)&:=\inf\left\{\int_{t_1}^{t_2}\overline{L}(\overline{\gamma}(s),-\dot{\overline{\gamma}}(s))ds\ :\ \overline{\gamma}\in\Gamma(t_1,t_2,x,y)\right\},\\
\overline{m}_{c}(t_1,t_2,x,y)&:=\inf\left\{\int_{t_1}^{t_2}\overline{L}(c,-\dot{\overline{\gamma}}(s))ds\ :\ \overline{\gamma}\in\Gamma(t_1,t_2,x,y)\right\}.
\end{align*}
\end{definition}

Although only the time difference $t_2-t_1$ impacts the calculation of the cost in the above notations, we still specify the start and end time points to maintain consistency with the notation used for the discounted static problem.

We note that the optimal control formulas \eqref{eqn:ocfue}, \eqref{eqn:ocfuebar} can be reformulated as
\begin{equation*}
    \begin{aligned}
        u^\epsilon (x, t)&= \inf \left\lbrace m^{\epsilon}(0,t,x,y)+g\left(y\right): y\in\mathbb{R}^n\right\rbrace
    \end{aligned}
\end{equation*}
and
\begin{equation*}
    \begin{aligned}
        u (x, t)&= \inf \left\lbrace \overline{m}(0,t,x,y)+g\left(y\right): y\in\mathbb{R}^n\right\rbrace,
    \end{aligned}
\end{equation*}
respectively.

We now present our main result, which establishes a rate of $O (\sqrt{\epsilon})$ for the multiscale setting.

\begin{theorem} \label{thm:main}
    Assume {\rm(H1)-(H4)} and let $g \in \mathrm{BUC}(\mathbb{R}^n) \cap \mathrm{Lip} (\mathbb{R}^n) $. For $\epsilon >0$, let $u^{\epsilon}$ be the unique viscosity solution to \eqref{eqn:ms} and $u$ be the unique viscosity solution to \eqref{eqn:eh}. Then there exists a constant $C>0$ depending only on $n$, $H$ and $\|Dg\|_{L^{\infty}(\mathbb{R}^n)}$ such that for any $(x,t)\in\mathbb{R}^n\times(0,\infty)$ and $\epsilon\in(0,1)$, we have
    \begin{equation}\label{conclusion}
    \begin{aligned}
        &|u^\epsilon(x, t)-u(x, t)| \leq C t\sqrt{\epsilon}, \qquad \qquad \text{ if } t \geq \sqrt{\epsilon},\\
        &|u^\epsilon(x, t)-u(x, t)| \leq C \min \left\{t, \epsilon \right\}, \quad \,\text{ if } 0<t < \sqrt{\epsilon}.
    \end{aligned}
    \end{equation}
\end{theorem}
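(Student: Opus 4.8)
The plan is to compare $u^\epsilon$ and $u$ through their optimal control formulas \eqref{eqn:ocfue}--\eqref{eqn:ocfuebar}, reducing everything to two-sided estimates on the metric quantities $m^\epsilon$, $m^\epsilon_c$ and $\overline m$, $\overline m_c$ from Definition \ref{def:notation}. The first reduction is a freezing-the-slow-variable step: using (H4) and \eqref{K_0L}, along a curve $\gamma$ on a time interval of length $h$ starting near a point $c$, one has $|L(\gamma(s),y,v)-L(c,y,v)|\le \mathrm{Lip}(H)\,|\gamma(s)-c|$, and since the near-optimal curves have controlled speed (by \eqref{K_0L}, a curve that is far from geodesic pays a quadratic cost, so the relevant curves satisfy $|\dot\gamma|\le C$), on an interval of length $h$ the slow variable moves at most $O(h)$. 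Hence $|m^\epsilon(t_1,t_1+h,x,y)-m^\epsilon_c(t_1,t_1+h,x,y)|\le C h^2$ whenever $|x-c|\le C h$, and similarly $|\overline m - \overline m_c|\le Ch^2$. This is the ``scale separation'' in the abstract: we will chop $[0,t]$ into $N\approx t/h$ subintervals of length $h$ (to be optimized as $h\sim\sqrt\epsilon$), freeze the slow variable on each, and sum the errors, paying $N\cdot Ch^2 = Cth$.

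The second, and main, ingredient is the single-cell estimate for the frozen problem: for a fixed slow variable $c$, $m^\epsilon_c(0,h,x,y)$ is exactly the metric associated to the Hamiltonian $H(c,\cdot,p)$ depending only on the oscillatory variable, so the known optimal-rate machinery applies. Concretely, after rescaling $z\mapsto z/\epsilon$ the frozen problem becomes a pure periodic-homogenization problem in the single oscillatory variable, and one expects
\[
\bigl|m^\epsilon_c(0,h,x,y)-\overline m_c(0,h,x,y)\bigr|\le C\epsilon
\]
when $h\gtrsim\epsilon$, with the upper bound ``$\le$'' coming from the curve-cutting lemma (as in \cite{TranYu2022,Burago1992}): take a near-optimal straight-ish curve for $\overline m_c$, cut it at times when $\gamma(s)/\epsilon$ returns to a fixed fundamental-domain representative, and on each returning piece replace it by the periodic corrector trajectory realizing $\overline H(c,\cdot)$, losing only $O(\epsilon)$ total; the lower bound ``$\ge$'' is the easier sub-additivity/weak-KAM direction. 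For the short-time regime $h<\epsilon$ one instead uses the crude bound $|m^\epsilon_c-\overline m_c|\le C\min\{h,\epsilon\}$ coming directly from \eqref{K_0L} and the corresponding bound on $\overline L$.

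Assembling: choose $h=\sqrt\epsilon$ (valid since then $h\ge\epsilon$, so the single-cell $O(\epsilon)$ estimate is available) and $N=\lceil t/h\rceil$. Along any near-optimal curve for $u^\epsilon(x,t)$ one gets a partition $0=s_0<\dots<s_N=t$ with endpoints $x=\gamma(s_0),\dots,\gamma(s_N)=y$; by the triangle-type decomposition,
\[
m^\epsilon(0,t,x,y)\ \ge\ \sum_{i=0}^{N-1}\Bigl(m^\epsilon_{\gamma(s_i)}(s_i,s_{i+1},\gamma(s_i),\gamma(s_{i+1}))-Ch^2\Bigr)\ \ge\ \sum_{i=0}^{N-1}\Bigl(\overline m_{\gamma(s_i)}-C\epsilon-Ch^2\Bigr)\ \ge\ \overline m(0,t,x,y)-CN(\epsilon+h^2),
\]
and $N(\epsilon+h^2)\le (t/\sqrt\epsilon+1)\cdot C\epsilon\le Ct\sqrt\epsilon$ when $t\ge\sqrt\epsilon$; the reverse inequality (starting from a near-optimal curve for $u$, freezing, invoking the single-cell upper bound via curve cutting, and gluing the pieces into an admissible curve for $u^\epsilon$) is symmetric and gives $|m^\epsilon-\overline m|\le Ct\sqrt\epsilon$, hence $|u^\epsilon-u|\le Ct\sqrt\epsilon$ after taking infima over $y$ and using that $g$ is Lipschitz. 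The short-time bound $|u^\epsilon-u|\le C\min\{t,\epsilon\}$ for $0<t<\sqrt\epsilon$ follows from a single application of the crude $\min\{t,\epsilon\}$ single-cell estimate without partitioning. The main obstacle I anticipate is making the freezing step (the $Ch^2$ control) rigorous uniformly along near-optimal curves: one must first establish an a priori speed/length bound for near-minimizers of $m^\epsilon$ and $\overline m$ on each subinterval (so that $\gamma$ stays within $Ch$ of its left endpoint), which requires care because the infimum defining $m^\epsilon$ need not be attained and the competitor curves must be truncated and reparametrized; a secondary difficulty is tracking the dependence of all constants only on $n$, $H$, and $\|Dg\|_{L^\infty}$ through the curve-cutting argument, in particular ensuring the $O(\epsilon)$ constant in the single-cell estimate is uniform in the frozen parameter $c$, which uses (H1) and (H4).
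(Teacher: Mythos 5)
Your proposal follows essentially the same route as the paper: partition $[0,t]$ into pieces of length $h=\sqrt{\epsilon}$, freeze the slow variable at the left endpoint of each piece (using (H4) plus a uniform speed bound for minimizers to get an $O(h^2)$ error per piece), apply the frozen-variable single-cell estimate $|m^\epsilon_c-\overline{m}_c|\le C\epsilon$ with constant uniform in $c$ (the paper's Lemma \ref{lem:metric}, proved via the curve-cutting argument in the Appendix), unfreeze and sum to balance $N(\epsilon+h^2)\sim t\sqrt\epsilon$, then symmetrize for the reverse inequality and use the comparison principle for the short-time regime. The two technical obstacles you flag at the end — a priori speed bounds along (near-)minimizers and uniformity of the single-cell constant in the frozen parameter $c$ — are precisely what the paper's Lemma \ref{lem:coercivity} and the Appendix proof of Lemma \ref{lem:metric} supply.
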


We also state a similar result for the static problem.

\begin{theorem}\label{thm:main2}
Assume {\rm(H1)-(H4)}. For $\lambda,\epsilon>0$, let $u^{\epsilon}$ be the unique viscosity solution to
\begin{equation}\label{discountepsilon}
\lambda u^{\epsilon}+H\left(x,\frac{x}{\epsilon},Du^{\epsilon}\right)=0,
\end{equation}
and let $u$ be the unique viscosity solution to
\begin{equation}\label{discountbar}
\lambda u+\overline{H}\left(x,Du\right)=0.
\end{equation}
Then, there exists a constant $C>0$ depending only on $n$ and $H$ such that and $\lambda,\epsilon\in(0,1)$, we have
\begin{align}\label{conclusion2}
\|u^{\epsilon}-u\|_{L^{\infty}(\mathbb{R}^n)}\leq \frac{C\sqrt{\epsilon}}{\lambda}.
\end{align}
\end{theorem}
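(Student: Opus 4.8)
The plan is to reduce the static estimate \eqref{conclusion2} to the Cauchy estimate of Theorem~\ref{thm:main} via the standard exponential-in-time representation of the discounted solution, so that essentially no new homogenization work is required. First I would record the optimal control formulas for \eqref{discountepsilon} and \eqref{discountbar}: writing $u^\epsilon(x)=\inf_\gamma \int_0^\infty e^{-\lambda s} L(\gamma(s),\gamma(s)/\epsilon,-\dot\gamma(s))\,ds$ over curves $\gamma\in\mathrm{AC}([0,\infty);\mathbb{R}^n)$ with $\gamma(0)=x$, and similarly for $u$ with $\overline{L}$ and no oscillation. From \eqref{K_0L} one gets the a priori bounds $\|u^\epsilon\|_{L^\infty}, \|u\|_{L^\infty}\le K_0/\lambda$ and, using (H2)/(H4), a Lipschitz bound $\|Du^\epsilon\|_{L^\infty}, \|Du\|_{L^\infty}\le C$ independent of $\lambda,\epsilon$ (this is the analogue of \eqref{eqn:u_prior}; it follows from coercivity of $H$ together with the Lipschitz-in-$x$ assumption (H4), or can simply be quoted from the classical theory as in \cite{tran_hamilton-jacobi_2021}).

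Next I would introduce the finite-horizon discounted value functions
\begin{equation*}
  v^\epsilon(x,t):=\inf\left\{\int_0^t e^{-\lambda s}L\!\left(\gamma(s),\tfrac{\gamma(s)}{\epsilon},-\dot\gamma(s)\right)ds:\ \gamma\in\mathrm{AC}([0,t];\mathbb{R}^n),\ \gamma(0)=x\right\},
\end{equation*}
and its homogenized counterpart $\overline v(x,t)$ with $\overline L$. A direct argument using the a priori Lipschitz bound and \eqref{K_0L} shows $|v^\epsilon(x,t)-u^\epsilon(x)|\le Ce^{-\lambda t}/\lambda$ and $|\overline v(x,t)-u(x)|\le Ce^{-\lambda t}/\lambda$, because truncating (or extending) the optimal trajectory at time $t$ and staying put afterward changes the cost by at most the tail $\int_t^\infty e^{-\lambda s}(C^2/2+K_0)\,ds$. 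Thus it suffices to bound $|v^\epsilon(x,t)-\overline v(x,t)|$ for a well-chosen $t$, and then optimize over $t$. The factor $e^{-\lambda s}\le 1$ means that $v^\epsilon$ and $\overline v$ are governed by the same $m^\epsilon$, $\overline m$ machinery as in the Cauchy problem up to a controlled perturbation: more precisely, along any curve the discount weight $e^{-\lambda s}$ lies in $[e^{-\lambda t},1]$ on $[0,t]$, so $v^\epsilon$ and the undiscounted finite-horizon value (with initial data $g\equiv 0$) differ by at most $C\lambda t^2/\dots$—but cleaner is to carry the weight through the scale-separation and curve-cutting estimates verbatim, noting that all those estimates only used upper bounds on $L$ along short subarcs, each of which is only decreased by the extra factor $e^{-\lambda s}\le 1$, while the lower direction is handled symmetrically since $e^{-\lambda s}\ge e^{-\lambda t}$ costs a harmless multiplicative $(1-e^{-\lambda t})\le \lambda t$ correction.

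Concretely, applying the Cauchy-problem bound of Theorem~\ref{thm:main} (with $g\equiv 0$, whose role is only to anchor the optimal control formula, and with the discount weight absorbed as above) gives $|v^\epsilon(x,t)-\overline v(x,t)|\le C(t\sqrt\epsilon + \lambda t^2)$ for $t\ge\sqrt\epsilon$, where the $\lambda t^2$ term is the error from replacing $e^{-\lambda s}$ by $1$ inside the integrals. Combining with the tail estimates,
\begin{equation*}
  \|u^\epsilon-u\|_{L^\infty(\mathbb{R}^n)}\ \le\ C\left(t\sqrt\epsilon + \lambda t^2 + \frac{e^{-\lambda t}}{\lambda}\right),
\end{equation*}
and choosing $t\sim 1/(\lambda|\log\lambda|)$ or, more simply, $t$ of order $1/\sqrt\lambda$ balances the terms: with $t=\lambda^{-1/2}$ one gets $C(\sqrt{\epsilon/\lambda}+1+\dots)$, which is too weak, so the right choice is $t\asymp \lambda^{-1}$, giving the tail term $O(e^{-c}/\lambda)=O(1/\lambda)$ and the main term $O(\sqrt\epsilon/\lambda)$, i.e. the claimed $C\sqrt\epsilon/\lambda$, while the $\lambda t^2=O(1/\lambda)$ term must be handled more carefully by instead comparing $v^\epsilon$ and $\overline v$ on dyadic time windows $[2^k,2^{k+1}]$ of total weighted length $O(1/\lambda)$ and summing $2^{k/2}\sqrt\epsilon$ geometrically. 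The main obstacle I anticipate is precisely this bookkeeping: making the passage from the finite-horizon, undiscounted Cauchy estimate to the infinite-horizon discounted setting while keeping the $\lambda$-dependence exactly $1/\lambda$ and not $1/\lambda$ times a logarithm. I expect this is resolved by running the scale-separation/curve-cutting argument of Theorem~\ref{thm:main} directly on the discounted functional (where the weight $e^{-\lambda s}$ only helps, as it damps the contribution of far-away times), rather than through a black-box reduction, so that the $O(\sqrt\epsilon)$ local-in-time gain is integrated against $e^{-\lambda s}\,ds$ to produce exactly $O(\sqrt\epsilon/\lambda)$.
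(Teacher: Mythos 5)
Your closing paragraph lands on essentially the same strategy the paper uses: run the scale-separation and curve-cutting argument directly on the discounted cost functional, partition $[0,\infty)$ into windows of length $\sqrt\epsilon$, and sum the per-window error of size $O(\epsilon)$ against the geometric weights $e^{-k\lambda\sqrt\epsilon}$ to get $O(\sqrt\epsilon/\lambda)$. However, the bulk of your write-up pursues a reduction-to-Theorem~\ref{thm:main} via the finite-horizon values $v^\epsilon,\overline v$, and you correctly observe that this route produces a spurious $\lambda t^2$ term, which after optimizing over $t$ gives $O(1/\lambda)$ rather than $O(\sqrt\epsilon/\lambda)$. That observation is right, and it means the black-box reduction does not prove the claim at the stated rate. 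Your suggested repair via dyadic windows $[2^k,2^{k+1}]$ also contains an arithmetic slip: the Cauchy estimate is $t\sqrt\epsilon$, so a window of length $2^k$ contributes on the order of $2^k\sqrt\epsilon$, not $2^{k/2}\sqrt\epsilon$; and on such a window the discount weight varies by a factor $e^{\lambda 2^k}$, which is uncontrolled once $2^k\gtrsim 1/\lambda$, so the passage to the undiscounted Cauchy problem on that window is not an $O(1)$ perturbation.

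The part you correctly flag as the likely fix is not actually carried out, and it hides two non-trivial points that the paper needs. First, the argument normalizes $H$ by $M:=\|H(\cdot,\cdot,0)\|_{L^\infty}$ so that $L\geq 0$ and $\overline L\geq 0$; without this, the key pointwise manipulation on a window $[t_k,t_{k+1}]$, namely
\begin{equation*}
\int_{t_k}^{t_{k+1}} e^{-\lambda s}\,L\bigl(x_k,\tfrac{\gamma_0(s)}{\epsilon},-\dot\gamma_0(s)\bigr)\,ds\ \geq\ e^{-\lambda t_{k+1}}\int_{t_k}^{t_{k+1}} L\bigl(x_k,\tfrac{\gamma_0(s)}{\epsilon},-\dot\gamma_0(s)\bigr)\,ds,
\end{equation*}
fails, since replacing $e^{-\lambda s}$ by its infimum only gives an inequality if the integrand is nonnegative. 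So the claim that the weight ``only helps'' is not quite accurate: on the lower-bound side it costs you, and the sign normalization is what keeps the cost to a harmless multiplicative factor $e^{-\lambda(t_{k+1}-t_k)}=e^{-\lambda\sqrt\epsilon}$ per window. Second, after summing over windows, one gets $u^\epsilon(x)\geq e^{-\lambda\sqrt\epsilon}u(x)-C\sqrt\epsilon/\lambda$, and one must convert the prefactor using the $L^\infty$ bound $\|u\|_{L^\infty}\leq M/\lambda$ to absorb $(1-e^{-\lambda\sqrt\epsilon})u(x)$ into an $O(\sqrt\epsilon)$ error. Both steps are essential to reaching $C\sqrt\epsilon/\lambda$ and are absent from the proposal. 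In short, you have the right high-level idea at the end, but the proposed proof as written either follows a route that provably loses the rate, or gestures at the correct route without the sign normalization and window-wise discount handling that make it work.
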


A notable difference (for Cauchy the problems) between the multiscale setting and the case where Hamiltonians $H=H(y, p)$ (as studied in \cite{TranYu2022}) is that the rate of convergence in the former depends on time $t$, as opposed to being uniform in $t$ for the latter. Specifically, in the multiscale setting, for $t$ large, the rate of convergence is $O(\sqrt{\epsilon})$, with the power of $\epsilon$ being $\frac{1}{2}$. This power arises from balancing the macroscale and microscale variables, which is a key feature of the multiscale setting.

We now outline the proof strategy for the lower bound when $t\geq \sqrt{\epsilon}$ in the multiscale setting. As the proof of Theorem \ref{thm:main2} is based on exactly the same idea, we focus on presenting the proof idea of Theorem \ref{thm:main}.

First, we consider a minimizing curve $\gamma_0:\left[0,t\right] \to \mathbb{R}^n$ for $u^\epsilon(x, t)$, i.e., $\displaystyle \gamma_0\left(0\right)=x$ and
\begin{equation} \label{eqn:demo}
u^\epsilon(x, t) = \int_0^t L\left(\gamma_0(s), \frac{\gamma_0(s)}{\epsilon}, -\dot{\gamma}_0(s)\right)ds+g(\gamma_0(t)).
\end{equation}
The main idea is to break $\gamma_0$ into $N$ evenly spaced pieces with respect to time, where $N$ needs to be determined appropriately. For each piece, we approximate its cost by fixing the first argument of $L$ in \eqref{eqn:demo}. More precisely, for the $k$-th piece where $k=0, 1, \cdots, N-1$, the time runs from $t_k=k\sqrt{\epsilon}$ to $t_{k+1}=(k+1)\sqrt{\epsilon}$, and we estimate the running cost within this time with the first argument fixed in Lagrangian by the value of the curve at the beginning $x_k=\gamma_0(t_k)$ of this piece, that is,
\begin{equation}\label{eqn:demofix}
\sum_{k=0}^{N-1}\int_{t_k}^{t_{k+1}}L\left(x_k,\frac{\gamma_0(s)}{\epsilon},-\dot{\gamma}_0(s)\right)ds
\end{equation}
The error for fixing the first argument of $L$ in the running cost for $N$ pieces of shorter curves is $\frac{t^2}{N}$ (under condition (H4), see Lemma \ref{lem:LbarLip}). 

For each piece with the first argument of $L$ fixed in the cost, we can use the definition of $m^{\epsilon}_{x_k}$ to obtain
\begin{equation}\label{eqn:demometric}
\sum_{k=0}^{N-1}\int_{t_k}^{t_{k+1}}L\left(x_k,\frac{\gamma_0(s)}{\epsilon},-\dot{\gamma}_0(s)\right)ds\geq\sum_{k=0}^{N-1}m^{\epsilon}_{x_k}(t_k,t_{k+1},\gamma_0(t_k),\gamma_0(t_{k+1})).
\end{equation}
Further, we can use the following lemma to connect $m^{\epsilon}_{x_k}$ with $\overline{m}_{x_k}$ and hence $u(x ,t)$.

\begin{lemma}\label{lem:metric}
Assume {\rm(H1)-(H3)}. Fix $c\in\mathbb{R}^n$. Let $x, y \in \mathbb{R}^n, \epsilon, t > 0$ and $M_0>0$ with $|y-x|\leq M_0t$. Let $K_0>0$ be a constant that satisfies \eqref{K_0H}, \eqref{K_0L}. Then, there exists a constant $C=C(n,M_0,K_0)>0$ such that
\begin{equation}\label{eqn:presult}
    \left|m^{\epsilon}_{c}(0,t,x,y)-\overline{m}_{c}(0,t,x,y)\right| \leq C \epsilon.
\end{equation}
\end{lemma}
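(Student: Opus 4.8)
The plan is to establish the two-sided bound by comparing the $\epsilon$-problem with frozen macro-variable $c$ directly to the homogenized problem with the same frozen $c$, which is precisely the single-scale homogenization situation treated in \cite{TranYu2022}. Since $c$ is fixed, the Lagrangian $L(c, \cdot, \cdot)$ depends only on the oscillatory variable and the velocity, so $\overline{m}_c$ is built from the effective Lagrangian $\overline{L}(c, \cdot)$ associated with $y \mapsto H(c, y, p)$. Thus the statement is a ``uniform in $c$'' restatement of the optimal $O(\epsilon)$ rate for convex Hamiltonians of the form $H(y,p)$. My first step is therefore to record that, under \eqref{K_0H}--\eqref{K_0L}, an optimal (or near-optimal) curve $\gamma$ for $m^\epsilon_c(0,t,x,y)$ can be taken with a uniform speed bound: combining the quadratic lower bound on $L$ with the trivial competitor (the straight line from $x$ to $y$, which costs at most $(M_0^2/2 + K_0)t$ since $|y-x| \le M_0 t$) yields $\int_0^t |\dot\gamma|^2 \, ds \le C(M_0, K_0)\, t$, and a standard argument upgrades this to a pointwise bound $|\dot\gamma(s)| \le C(M_0,K_0)$ on the optimal curve, so only the values of $L(c, y, v)$ for $|v|$ bounded matter.

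Next I would invoke the curve-cutting lemma from metric geometry (as in \cite{Burago1992, TranYu2022}) applied to the metric $d^\epsilon_c(x,y) := m^\epsilon_c(0, |y-x|/M_0 \vee t, x, y)$ — more precisely to the length structure induced by $L(c, \cdot, \cdot)$ on $\mathbb{R}^n$ — to cut the near-optimal curve $\gamma$ into pieces each of which travels across roughly one period $\epsilon \mathbb{T}^n$ of the microstructure, with the endpoints of consecutive pieces lying on the lattice $\epsilon \mathbb{Z}^n$ up to $O(\epsilon)$. On each such piece the cost is comparable to the cell-problem value, and summing gives $m^\epsilon_c(0,t,x,y) \ge \overline{m}_c(0,t,x,y) - C\epsilon$; the reverse inequality $m^\epsilon_c(0,t,x,y) \le \overline{m}_c(0,t,x,y) + C\epsilon$ follows by building a competitor for the $\epsilon$-problem out of an optimal curve for $\overline{m}_c$ corrected by solutions of the cell problem on each period (the perturbed-test-function/optimal-control construction). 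The crucial point to check is that every constant produced in this process depends only on $n$, $M_0$, and $K_0$ and not on $c$ itself: this holds because the only structural inputs used are the uniform quadratic bounds \eqref{K_0H}--\eqref{K_0L}, which are uniform in $x$ (hence in $c$), together with the a priori Lipschitz bound on $\overline{L}(c, \cdot)$ that follows from them.

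The main obstacle I anticipate is handling the short-time regime $t$ small (comparable to or smaller than $\epsilon$) together with endpoints $x, y$ that may be arbitrarily far apart relative to the microstructure when $M_0$ is large: the curve-cutting argument is cleanest when the curve traverses many periods, so when $t$ is small one must instead argue that both $m^\epsilon_c$ and $\overline{m}_c$ are within $O(\epsilon)$ of the ``free'' cost $\tfrac{|y-x|^2}{2t}$ (using \eqref{K_0L} and the analogous bound for $\overline{L}$, namely $\tfrac{|v|^2}{2} - K_0 \le \overline{L}(c,v) \le \tfrac{|v|^2}{2} + K_0$, which is inherited from \eqref{K_0H}). In this regime the oscillation contributes at most $O(\epsilon)$ because the curve cannot ``see'' more than finitely many periods' worth of variation beyond what the quadratic part already controls. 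Reconciling the long-time cutting estimate with this short-time direct estimate, and verifying the overlap region, is where the bookkeeping is heaviest; once that is done, the constant $C(n, M_0, K_0)$ is assembled from the curve-cutting constant, the cell-problem Lipschitz constant, and $K_0$.
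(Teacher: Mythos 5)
Your high-level reduction is right: with $c$ frozen, the statement is precisely the single-scale $O(\epsilon)$ rate of \cite{TranYu2022} made uniform in the frozen macro-variable, and you correctly identify both the velocity bound coming from \eqref{K_0H}--\eqref{K_0L} and the fact that uniformity in $c$ is the new point that has to be checked. Your treatment of the regime $t\lesssim\epsilon$ is also fine once one observes that after rescaling $s=t/\epsilon$ the time is $O(1)$, so both sides are $O(1)$ by the quadratic bounds and the difference is trivially $O(\epsilon)$ after scaling back; this is exactly the paper's case $t\le 6$ in the appendix.

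The gap is in how you propose to prove the inequality $m^\epsilon_c\le\overline m_c+C\epsilon$. You suggest building a competitor for the $\epsilon$-problem from an optimal curve for $\overline m_c$ ``corrected by solutions of the cell problem on each period,'' i.e.\ a corrector/perturbed-test-function construction. This is not what the paper does, and it is the step that historically does \emph{not} give the sharp rate unconditionally: the corrector is only Lipschitz, and this is precisely why \cite{MitakeTranYu2019} could only obtain the matching upper bound under extra assumptions, and why \cite{TranYu2022} introduced the metric-geometry argument in the first place. The paper instead establishes \emph{both} one-sided inequalities by proving the two approximate self-similarity bounds
\[
2\,m^c(t,0,y)\le m^c(2t,0,2y)+C
\qquad\text{and}\qquad
m^c(2t,0,2y)\le 2\,m^c(t,0,y)+C,
\]
with $C=C(n,M_0,K_0)$ independent of $c$, and then deduces $|m^c(t,0,tv)-t\,\overline L(c,v)|\le C$ by the standard Fekete-type dyadic telescoping; undoing the $\epsilon$-scaling gives \eqref{eqn:presult}. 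The first of these is cited to \cite[Lemma~3.2]{TranYu2022} (the Burago curve-cutting lemma), with the only new content being the observation that the constants there depend only on $n,M_0,K_0$ once \eqref{K_0H}--\eqref{K_0L} hold. The second is proved directly in the appendix by a curve-splicing argument: take a near-optimal curve $\zeta$ for $m^c(t,0,y)$, locate a ``cheap'' subinterval of length $3/2$ (pigeonhole on the total cost), translate $\zeta$ by the integer vector $w$ with $y-w\in[0,1]^n$ using $\mathbb{Z}^n$-periodicity, and insert short straight legs at the ends together with a time-compression on the cheap subinterval. Your piecewise ``snap each period to $\epsilon\mathbb{Z}^n$'' picture for the lower bound is also not quite what happens: one only snaps the two endpoints (an $O(\epsilon)$ correction), after which the exact inequality $m^c(s,0,z)\ge s\,\overline L(c,z/s)$ for $z\in\mathbb{Z}^n$ — a consequence of subadditivity plus translation invariance, no curve cutting needed — does the rest. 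So the skeleton of your proposal is reasonable, but the route you choose for the upper bound would need to be replaced by the Burago/Fekete mechanism, and the lower bound argument should be restructured around the endpoint-snapping plus the appendix's splicing lemma rather than a per-period comparison to cell-problem values.
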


\begin{remark}
This lemma is a generalization of \cite[Lemmas 3.1, 3.2]{TranYu2022}. In \cite{TranYu2022}, it is proved that for a fixed $c \in \mathbb{R}^n$, and for the Lagrangian $L^{c}(\cdot,\cdot)=L(c,\cdot,\cdot)$, there exists a constant $C=C(n,L^{c},M_0)>0$ such that for any $x, y \in \mathbb{R}^n,$ $\epsilon, t>0$ with $|y-x|\leq M_0t$, we have the conclusion of Lemma \ref{lem:metric} as above. Although the constant $C=C(n,L^{c},M_0)>0$ could potentially depend on ${c}\in\mathbb{R}^n$ due to the dependency of $L^{c}$ on ${c}\in\mathbb{R}^n$, it can be shown, under the assumptions {\rm(H1)-(H3)}, that the constant $C>0$ depends only on $n,M_0,K_0$, as presented in Appendix. 
\end{remark}

Using Lemma \ref{lem:metric}, we can approximate each term on the right-hand side of \eqref{eqn:demometric} by $\overline{m}_{x_k}$ with the corresponding arguments, incurring an error of $\epsilon N$ for the sum of $N$ terms. Furthermore, by constructing an admissible path for $u(x, t)$, we can replace $\overline{m}_{x_k}$ with $u(x, t)$, introducing an additional error of $\frac{t^2}{N}$. Thus, we obtain the inequality
\[
u^\epsilon(x, t) \geq u(x,t)-C\frac{t^2}{N}-CN\epsilon,
\]
where $C > 0$ is a constant that depends only on $n,M_0,K_0$. In summary, we have one source of error coming from fixing the macroscale variable in approximating the running cost and the other source of error caused by handling the microscale variable with Lemma \ref{lem:metric}. To minimize the total error, that is, to balance between $\frac{t^2}{N}$ and $N\epsilon$, the best $N$ we can choose is $N=\frac{t}{\sqrt{\epsilon}}$, which yields a bound of $Ct\sqrt{\epsilon}$ on the total error. 

The balance between the spatial variable and the oscillatory variable in homogenization is a key feature of the multiscale setting, and it is the first work in the literature where scale separations occur at the level of optimal curves for the solutions. As we can see, it is crucial in the proof of Theorem \ref{thm:main} that the constant $C>0$ in Lemma \ref{lem:metric} is independent of ${c}\in\mathbb{R}^n$, as we freeze the spatial variable at various places along minimizing curves. Also, we will see that the involvement of time $t$ in the bound $Ct\sqrt{\epsilon}$ is necessary by an example, which is also a feature distinguished from the case where Hamiltonians do not depend on the spatial variable.

\begin{remark}\label{rem:cond4}
Condition {\rm(H4)} is a necessary assumption for the approach of fixing the $x$-arguments to work. This condition enables us to bound the error caused by freezing the spatial variable. In Proposition \ref{exp:nolip}, we provide an illustration of the case where {\rm(H4)} is not satisfied, and the error cannot be controlled in this way.
\end{remark}

\subsection*{Organization of this paper}
In Section \ref{sec:provethm1}, we prove Theorem \ref{thm:main}. In Section \ref{sec:provethm2}, we verify Theorem \ref{thm:main2}. In Section \ref{sec:example}, we provide examples that demonstrate the optimality of the rate of convergence suggested in Theorem \ref{thm:main}. In Appendix, we show Lemma \ref{lem:metric} in detail.

\section{Proof of Theorem \ref{thm:main}}\label{sec:provethm1}

\subsection{Preliminaries}

We begin by stating that throughout this paper, we will use $C,C_0,K_0,M,M_0>0$ to denote positive constants, and their dependence on parameters will be specified as their arguments. The constants $C_0=C_0\left(H,\left\|Dg\right\|_{L^\infty(\mathbb{R}^n)}\right),\,K_0=K_0\left(H,\left\|Dg\right\|_{L^\infty(\mathbb{R}^n)}\right),$ $M_0=M_0\left(H, \, \left\|Dg\right\|_{L^\infty(\mathbb{R}^n)}\right)>0$ will be fixed throughout this paper, while $C,M>0$ may vary line by line.

Prior to proving Theorem \ref{thm:main}, we introduce two essential lemmas that will assist us in constraining the errors that arise when we freeze the first argument of $L$ in the running cost.

We first state the lemma about the boundedness of velocities of minimizing curves.

\begin{lemma}\label{lem:coercivity}
Assume {\rm(H1)-(H3)}. Let $x\in \mathbb{R}^n$, $t>0$ and $ \epsilon>0$. Suppose that $\gamma:\left[0, t\right] \to \mathbb{R}^n$ is a minimizing curve of $u^\epsilon(x, t)$ in the sense that $\gamma$ is absolutely continuous, and
\begin{equation}
u^\epsilon (x, t) = \int_0^t L\left(\gamma(s), \frac{\gamma(s)}{\epsilon} ,-\dot{\gamma}(s)\right) ds+g\left(\gamma(t)\right)  
\end{equation}
with $\gamma\left(0\right)=x$.
Then, there exists a constant $M_0=M_0\left(H, \|Dg\|_{L^\infty(\mathbb{R}^n)}\right)>0$ such that $ \left\|\dot{\gamma}\right\|_{L^\infty([0,t])} \leq M_0$.
Similarly, if $\overline{\gamma}:[0,t]\to\mathbb
{R}^n$ is a minimizing curve of $u(x, t)$ in the sense that $\overline{\gamma}$ is absolutely continuous, and
\begin{equation}
u (x, t) = \int_0^t \overline{L} \left( \overline{\gamma}(s), -\dot{\overline{\gamma}}(s)\right) ds+g\left(\overline{\gamma}(t)\right)
\end{equation}
with $\overline{\gamma}(0)=x$, then there exists a constant $M_0=M_0\left(H, \|Dg\|_{L^\infty(\mathbb{R}^n)}\right)>0$ such that $ \left\|\dot{\overline{\gamma}}\right\|_{L^\infty([0,t])} \leq M_0$.
\end{lemma}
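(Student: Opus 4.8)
The plan is to prove Lemma~\ref{lem:coercivity} via the coercivity bound \eqref{K_0L} on the Lagrangian together with the Lipschitz estimate \eqref{eqn:u_prior} on the solutions. I focus on the first assertion (the argument for $\overline{\gamma}$ is identical once one notes that $\overline{L}$, being the Legendre transform of $\overline{H}$ and $\overline H$ inheriting bounds of the type \eqref{K_0H}, satisfies the same two-sided quadratic bound \eqref{K_0L}, and that $\overline L$ does not depend on the oscillatory variable at all).

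First I would record the competitor estimate. Taking the straight-line curve $\sigma(s) = x$ for $s\in[0,t]$ as an admissible path in \eqref{eqn:ocfue}, we get
\begin{equation*}
u^\epsilon(x,t) \leq \int_0^t L\Bigl(x,\tfrac{x}{\epsilon},0\Bigr)\,ds + g(x) \leq K_0 t + g(x),
\end{equation*}
using the upper bound in \eqref{K_0L} with $v=0$. On the other hand, for the minimizing curve $\gamma$ the lower bound in \eqref{K_0L} gives
\begin{equation*}
u^\epsilon(x,t) = \int_0^t L\Bigl(\gamma(s),\tfrac{\gamma(s)}{\epsilon},-\dot\gamma(s)\Bigr)\,ds + g(\gamma(t)) \geq \int_0^t \Bigl(\tfrac{|\dot\gamma(s)|^2}{2} - K_0\Bigr)\,ds + g(\gamma(t)).
\end{equation*}
Combining the two, and using that $g$ is Lipschitz with constant $\|Dg\|_{L^\infty}$ so that $g(x) - g(\gamma(t)) \leq \|Dg\|_{L^\infty}\,|\gamma(t)-x| = \|Dg\|_{L^\infty}\,\bigl|\int_0^t \dot\gamma(s)\,ds\bigr| \leq \|Dg\|_{L^\infty}\int_0^t |\dot\gamma(s)|\,ds$, we obtain
\begin{equation*}
\tfrac12\int_0^t |\dot\gamma(s)|^2\,ds \leq 2K_0 t + \|Dg\|_{L^\infty}\int_0^t |\dot\gamma(s)|\,ds,
\end{equation*}
which by Cauchy--Schwarz ($\int_0^t|\dot\gamma| \leq t^{1/2}(\int_0^t|\dot\gamma|^2)^{1/2}$) and Young's inequality yields a bound of the form $\int_0^t |\dot\gamma(s)|^2\,ds \leq C t$ with $C = C(K_0,\|Dg\|_{L^\infty})$. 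This controls the kinetic energy but only in an $L^2$-averaged sense, not pointwise.

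To upgrade this to the pointwise bound $\|\dot\gamma\|_{L^\infty([0,t])}\leq M_0$, I would invoke the standard regularity of minimizers: since $L$ is (after the modification making \eqref{K_0H}--\eqref{K_0L} hold) coercive and convex in the velocity variable and, by (H1), continuous, any minimizer of the action is a Lipschitz curve, and indeed one can run the classical argument — either via the Euler--Lagrange/DuBois--Reymond equation, or more robustly via the dynamic programming principle: for $0\le s_1<s_2\le t$ one has $u^\epsilon(\gamma(s_1),t-s_1) = \int_{s_1}^{s_2} L(\gamma,\gamma/\epsilon,-\dot\gamma)\,ds + u^\epsilon(\gamma(s_2),t-s_2)$, and comparing with the straight segment from $\gamma(s_1)$ to $\gamma(s_2)$ gives $\int_{s_1}^{s_2}\frac{|\dot\gamma|^2}{2}\,ds \le \frac{|\gamma(s_2)-\gamma(s_1)|^2}{2(s_2-s_1)} + 2K_0(s_2-s_1) + \|Du^\epsilon\|_{L^\infty}|\gamma(s_2)-\gamma(s_1)|$. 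Using $\|Du^\epsilon\|_{L^\infty}\leq C_0$ from \eqref{eqn:u_prior} and Jensen's inequality $|\gamma(s_2)-\gamma(s_1)|^2 \le (s_2-s_1)\int_{s_1}^{s_2}|\dot\gamma|^2$, one deduces that the average of $|\dot\gamma|^2$ over $[s_1,s_2]$ is bounded by a constant depending only on $C_0$ and $K_0$, uniformly in the interval; letting $s_2 \to s_1$ at Lebesgue points of $|\dot\gamma|^2$ gives the pointwise bound $|\dot\gamma(s)| \leq M_0$ with $M_0 = M_0(H,\|Dg\|_{L^\infty})$ (recalling $C_0,K_0$ depend only on these).

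The main obstacle is precisely this last passage from an energy bound to a pointwise bound: a priori a minimizer need only be absolutely continuous, so one must genuinely use the optimality (via the dynamic programming principle or the Euler--Lagrange equation) rather than just the value of the action at the endpoints, and one must be careful that the constant extracted is uniform in $\epsilon$ — which it is, because the only $\epsilon$-dependence enters through $L(\cdot,\gamma/\epsilon,\cdot)$, and the bounds \eqref{K_0L} are uniform in the oscillatory variable $y\in\mathbb{T}^n$. Everything else is routine. For the $\overline\gamma$ statement, the same argument applies verbatim with $\overline L$ in place of $L$ and $u$ in place of $u^\epsilon$, using that $\overline H$ satisfies a bound of the form \eqref{K_0H} (hence $\overline L$ satisfies \eqref{K_0L}) and that $u$ is Lipschitz with the same constant $C_0$.
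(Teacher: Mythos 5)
The paper itself omits this proof, citing \cite{tran_hamilton-jacobi_2021}, so I evaluate your argument on its own terms. Your overall strategy (dynamic programming principle on subintervals, a comparison competitor, coercivity of $L$ via \eqref{K_0L}, Lipschitz bound on $u^\epsilon$, and passing to Lebesgue points) is the right one, and the preliminary reductions (including the observation that $\overline H$ inherits \eqref{K_0H} and hence $\overline L$ satisfies \eqref{K_0L}) are correct. However, there is a genuine gap in the key step.

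The inequality you write,
\begin{equation*}
\int_{s_1}^{s_2}\frac{|\dot\gamma|^2}{2}\,ds \le \frac{|\gamma(s_2)-\gamma(s_1)|^2}{2(s_2-s_1)} + 2K_0(s_2-s_1) + \|Du^\epsilon\|_{L^\infty}\,|\gamma(s_2)-\gamma(s_1)|,
\end{equation*}
mixes two different competitors. Comparing $\gamma$ on $[s_1,s_2]$ with the straight segment joining $\gamma(s_1)$ to $\gamma(s_2)$ produces the term $\frac{|\gamma(s_2)-\gamma(s_1)|^2}{2(s_2-s_1)}$ but no $\|Du^\epsilon\|_{L^\infty}$ term (the terminal costs cancel); comparing with the \emph{constant} curve $\sigma\equiv\gamma(s_1)$ produces the $\|Du^\epsilon\|_{L^\infty}\,|\gamma(s_2)-\gamma(s_1)|$ term but no quadratic term. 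Your hybrid is still a valid upper bound, but when you then apply Jensen to the quadratic term, $|\gamma(s_2)-\gamma(s_1)|^2\le (s_2-s_1)\int_{s_1}^{s_2}|\dot\gamma|^2$, the first term on the right becomes exactly $\frac12\int_{s_1}^{s_2}|\dot\gamma|^2$, which cancels the left side and leaves the vacuous statement $0\le 2K_0(s_2-s_1)+C_0|\gamma(s_2)-\gamma(s_1)|$. The straight-line competitor cannot yield a pointwise velocity bound on its own, because at a Lebesgue point of $\dot\gamma$ the leading terms on both sides agree to order $(s_2-s_1)$.

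The fix is to use \emph{only} the constant-curve competitor. Then the DPP plus \eqref{K_0L} gives $\frac12\int_{s_1}^{s_2}|\dot\gamma|^2\,ds\le 2K_0(s_2-s_1)+C_0|\gamma(s_2)-\gamma(s_1)|$, with no quadratic term on the right. Applying Cauchy--Schwarz to the last term, $|\gamma(s_2)-\gamma(s_1)|\le (s_2-s_1)^{1/2}\bigl(\int_{s_1}^{s_2}|\dot\gamma|^2\bigr)^{1/2}$, and dividing by $s_2-s_1$, one obtains for the average $B:=\frac{1}{s_2-s_1}\int_{s_1}^{s_2}|\dot\gamma|^2\,ds$ the self-improving inequality $\frac{B}{2}\le 2K_0+C_0\sqrt{B}$, hence $B\le (C_0+\sqrt{C_0^2+4K_0})^2$, and letting $s_2\downarrow s_1$ at Lebesgue points of $|\dot\gamma|^2$ yields $\|\dot\gamma\|_{L^\infty}\le C_0+\sqrt{C_0^2+4K_0}=:M_0$, which depends only on $H$ and $\|Dg\|_{L^\infty}$ via $C_0,K_0$. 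With this correction the proof is complete, and your observation that the bound is uniform in $\epsilon$ (because \eqref{K_0L} is uniform in the oscillatory variable) and that the argument for $\overline\gamma$ is verbatim is accurate. Also note that your preliminary $L^2$ energy estimate (Step 3) is not actually needed once the constant-curve DPP argument is in place.
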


The following lemma states that $L(\cdot, y, v)$ and $\overline{L}(\cdot, v)$ are Lipschitz uniformly in $y$ and $v$ under the condition (H4). 

\begin{lemma}\label{lem:LbarLip}
Assume {\rm(H1)-(H4)}. Then,
\[
\left|L(x_1, y, v)-L(x_2, y, v)\right| \leq \mathrm{Lip}(H) |x_1-x_2|,
\]
and
\[
\left|\overline{L}(x_1, v)-\overline{L}(x_2, v)\right| \leq \mathrm{Lip}(H) |x_1-x_2|,
\]
for any $x_1, x_2 \in \mathbb{R}^n$, $y \in \mathbb{T}^n$, and $v \in \mathbb{R}^n$.
\end{lemma}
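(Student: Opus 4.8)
The plan is to prove Lemma \ref{lem:LbarLip} in two stages: first the statement for $L$, obtained by transferring the Lipschitz bound (H4) on $H$ through the Legendre transform, and then the statement for $\overline{L}$, obtained from the statement for $L$ together with the variational (inf-sup) characterization of the effective Lagrangian via the cell problem.

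For the first part, recall that for each fixed $x$ and $y$ the Lagrangian is the Legendre transform $L(x,y,v)=\sup_{p\in\mathbb{R}^n}\bigl(p\cdot v - H(x,y,p)\bigr)$. Fix $x_1,x_2\in\mathbb{R}^n$, $y\in\mathbb{T}^n$, $v\in\mathbb{R}^n$. For any $p\in\mathbb{R}^n$, (H4) gives $H(x_1,y,p)\le H(x_2,y,p)+\mathrm{Lip}(H)|x_1-x_2|$, hence $p\cdot v - H(x_1,y,p)\ge p\cdot v - H(x_2,y,p)-\mathrm{Lip}(H)|x_1-x_2|$; taking the supremum over $p$ yields $L(x_1,y,v)\ge L(x_2,y,v)-\mathrm{Lip}(H)|x_1-x_2|$. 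Swapping the roles of $x_1$ and $x_2$ gives the reverse inequality, so $|L(x_1,y,v)-L(x_2,y,v)|\le \mathrm{Lip}(H)|x_1-x_2|$. (The suprema are finite and attained because of the coercivity/quadratic bounds \eqref{K_0H}, \eqref{K_0L}, so there is no integrability issue.) One should note that the Legendre transform here is taken in $p$ with $x,y$ as passive parameters, which is exactly the structure used in the optimal control formula \eqref{eqn:ocfue}, so this is the correct notion of $L$.

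For the second part, I would use the fact that the effective Hamiltonian $\overline{H}(x,p)$ is characterized, for each frozen $x$, by the cell problem $H(x,y,p+D_y w)=\overline{H}(x,p)$ on $\mathbb{T}^n$, and that the corresponding effective Lagrangian $\overline{L}(x,v)$ is the Legendre transform of $\overline{H}(x,\cdot)$. The cleanest route is to show directly that $|\overline{H}(x_1,p)-\overline{H}(x_2,p)|\le \mathrm{Lip}(H)|x_1-x_2|$ for all $p$ — this follows from the comparison principle for the cell problems: the function $y\mapsto w_2(y)$ solving the cell problem at $x_2$ is, up to the additive constant $\mathrm{Lip}(H)|x_1-x_2|\,t$-type shift handled via (H4), a sub/supersolution of the cell problem at $x_1$, forcing $\overline{H}(x_1,p)\le \overline{H}(x_2,p)+\mathrm{Lip}(H)|x_1-x_2|$ and symmetrically. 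Then exactly the same Legendre-transform argument as in the first part, now applied to $\overline{H}(\cdot,p)$ in the variable $p$, gives $|\overline{L}(x_1,v)-\overline{L}(x_2,v)|\le \mathrm{Lip}(H)|x_1-x_2|$. Alternatively, and perhaps more in the spirit of the present paper, one can argue purely at the level of the metrics: using the representation $\overline{m}_c(0,t,x,y)$ together with Lemma \ref{lem:metric} and the analogous identity for $m^\epsilon$, the $x$-Lipschitz bound on $L$ passes to $\overline{m}$ uniformly, and dividing by $t$ and letting the endpoints realize a fixed velocity $v$ recovers the bound on $\overline{L}$; but the cell-problem/comparison route is shorter.

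The main obstacle is the passage to $\overline{L}$: one must be careful that the additive-eigenvalue structure of the cell problem behaves well under perturbing the frozen parameter $x$, i.e. that the comparison principle can be applied with the $\mathrm{Lip}(H)|x_1-x_2|$ discrepancy absorbed as a constant shift of the eigenvalue rather than creating a genuine inhomogeneity. This is standard, since a constant added to the right-hand side of a cell problem just shifts $\overline{H}$ by that constant, but it is the one step where the argument is not a pure one-line Legendre-duality computation. Everything else — finiteness of the transforms, the symmetry trick in $x_1\leftrightarrow x_2$ — is routine given (H1)–(H4) and the quadratic bounds \eqref{K_0H}, \eqref{K_0L}.
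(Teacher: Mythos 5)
Your proof is correct, and since the paper omits the argument entirely (deferring to the cited textbook), there is no in-paper proof to compare against; your argument is the standard one. The first half is a clean one-line Legendre-duality estimate from (H4). The second half correctly identifies the only nontrivial step: transferring the $x$-Lipschitz bound to $\overline{H}$ via the cell problem. Your use of the characterization of $\overline{H}(x,p)$ as the least $c$ for which $H(x,y,p+D\phi)\le c$ admits a periodic viscosity subsolution is exactly right: a solution $w_2$ of the cell problem at $x_2$ is, by (H4), a subsolution of the $x_1$-cell problem with level shifted up by $\mathrm{Lip}(H)\lvert x_1-x_2\rvert$, so $\overline{H}(x_1,p)\le\overline{H}(x_2,p)+\mathrm{Lip}(H)\lvert x_1-x_2\rvert$, and symmetry finishes it; the Legendre-transform step for $\overline{L}$ is then identical to the one for $L$. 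Your observation that the constant shift is absorbed into the additive eigenvalue rather than creating an inhomogeneity is the key point, and you state it correctly. The alternative metric-based route you sketch via $\overline{m}_c$ and Lemma~\ref{lem:metric} would also work but is indeed more circuitous and would risk introducing an extra dependence on $n,M_0,K_0$ that the clean duality argument avoids.
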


The proofs of the above two lemmas are omitted here. See \cite{tran_hamilton-jacobi_2021} for more details.

\subsection{Proof}
We are now ready to prove Theorem \ref{thm:main}.

\begin{proof}[Proof of Theorem \ref{thm:main}]
Let $x\in\mathbb{R}^n,\ \epsilon,t>0$. We first show that for some constant $C=C(n,H,\|Dg\|_{L^{\infty}(\mathbb{R}^n)})>0$, it holds that $u^{\epsilon}(x,t)-u(x,t)\geq-Ct\sqrt{\epsilon}$ for $t\geq\sqrt{\epsilon}$, and that $u^{\epsilon}(x,t)-u(x,t)\geq-C\epsilon$ for $t\in(0,\sqrt{\epsilon})$.

\medskip

Let $\gamma_0:[0,t]\to\mathbb{R}^n$ be an absolutely continuous curve with $\gamma_0(0)=x$ such that
$$
u^{\epsilon}(x,t)=\int_0^tL\left(\gamma_0(s),\frac{\gamma_0(s)}{\epsilon},-\dot{\gamma_0}(s)\right)ds+g(\gamma_0(t))=m^{\epsilon}(0,t,x,y)+g(y),
$$
where $y$ denotes the point $\gamma_0(t)\in\mathbb{R}^n$. Then,
$$
u(x,t)\leq\overline{m}(0,t,x,y)+g(y),
$$
and thus,
\begin{align}\label{lowerboundinm}
u^{\epsilon}(x,t)-u(x,t)\geq m^{\epsilon}(0,t,x,y)-\overline{m}(0,t,x,y).
\end{align}

\medskip

In order to give a lower bound of $m^{\epsilon}(0,t,x,y)-\overline{m}(0,t,x,y)$, we consider a partition
$$
0=t_0\leq t_1\leq t_2\leq \cdots \leq t_k\leq t_{k+1}\leq\cdots \leq t_N\leq t_{N+1}=t
$$
of the interval $[0,t]$, where $N$ is a nonnegative integer that will be determined later together with the division. On each interval $[t_k,t_{k+1}]$ for $k=0,\cdots,N$, we freeze the spatial variable, homogenize in the oscillatory variable, and then unfreeze the spatial variable in divided steps as follows. We finally estimate the commutators arising from these steps.

\medskip

Step 1: Freeze the spatial variable.

\medskip

For each $k=0,\cdots,N$, let $x_k:=\gamma_0(t_k)$. Then, for each $k=0,\cdots,N,$
$$
m^{\epsilon}(t_k,t_{k+1},\gamma_0(t_k),\gamma_{0}(t_{k+1}))=\int_{t_k}^{t_{k+1}}L\left(x_k,\frac{\gamma_0(s)}{\epsilon},-\dot{\gamma_0}(s)\right)ds+E_k,
$$
where
$$
E_k:=\int_{t_k}^{t_{k+1}}L\left(\gamma_0(s),\frac{\gamma_0(s)}{\epsilon},-\dot{\gamma_0}(s)\right)ds-\int_{t_k}^{t_{k+1}}L\left(x_k,\frac{\gamma_0(s)}{\epsilon},-\dot{\gamma_0}(s)\right)ds.
$$

\medskip

Step 2: Homogenize in the oscillatory variable.

\medskip

We apply Lemma \ref{lem:metric} and Lemma \ref{lem:coercivity} to see that there exists a constant $C=C(n,H,\|Dg\|_{L^{\infty}(\mathbb{R}^n)})>0$ such that
\begin{align*}
\int_{t_k}^{t_{k+1}}L\left(x_k,\frac{\gamma_0(s)}{\epsilon},-\dot{\gamma_0}(s)\right)ds&\geq m^{\epsilon}_{x_k}(t_k,t_{k+1},\gamma_0(t_k),\gamma_0(t_{k+1}))\\
&\geq\overline{m}_{x_k}(t_k,t_{k+1},\gamma_0(t_k),\gamma_0(t_{k+1}))-C\epsilon
\end{align*}
for each $k=0,\cdots,N$. It is a crucial fact that the constant $C>0$ is independent of $k=0,\cdots,N,$ i.e., independent of the spatial positions.

\medskip

Step 3: Unfreeze the spatial variable.

\medskip

For each $k=0,\cdots,N$, let $\overline{\gamma}_k:[t_k,t_{k+1}]\to\mathbb{R}^n$ be an absolutely continuous curve with $\overline{\gamma}_k(t_k)=\gamma_0(t_k),\ \overline{\gamma}_{k}(t_{k+1})=\gamma_0(t_{k+1})$ such that
\begin{align*}
\overline{m}_{x_k}(t_k,t_{k+1},\gamma_0(t_k),\gamma_0(t_{k+1}))=\int_{t_k}^{t_{k+1}}\overline{L}(x_k,-\dot{\overline{\gamma}}_k(s))ds.
\end{align*}
Then,
\begin{align*}
\overline{m}_{x_k}(t_k,t_{k+1},\gamma_0(t_k),\gamma_0(t_{k+1}))&=\int_{t_k}^{t_{k+1}}\overline{L}(\overline{\gamma}_k(s),-\dot{\overline{\gamma}}_k(s))ds-\overline{E}_k\\
&\geq\overline{m}(t_k,t_{k+1},\gamma_0(t_k),\gamma_0(t_{k+1}))-\overline{E}_k,
\end{align*}
where
\begin{align*}
\overline{E}_k:=\int_{t_k}^{t_{k+1}}\overline{L}(\overline{\gamma}_k(s),-\dot{\overline{\gamma}}_k(s))ds-\int_{t_k}^{t_{k+1}}\overline{L}(x_k,-\dot{\overline{\gamma}}_k(s))ds
\end{align*}
for each $k=0,\cdots,N$.

\medskip

Step 4: Estimate the errors $E_k,\overline{E}_k$ and obtain a lower bound.

\medskip

From Steps 1-3, we have that for each $k=0,\cdots,N$,
\begin{align*}
m^{\epsilon}(t_k,t_{k+1},\gamma_0(t_k),\gamma_0(t_{k+1}))\geq \overline{m}(t_k,t_{k+1},\gamma_0(t_k),\gamma_0(t_{k+1}))+E_k-\overline{E}_k-C\epsilon.
\end{align*}
Since
\begin{align*}
m^{\epsilon}(0,t,x,y)=\sum_{k=0}^Nm^{\epsilon}(t_k,t_{k+1},\gamma_0(t_k),\gamma_0(t_{k+1}))
\end{align*}
and
\begin{align*}
\overline{m}(0,t,x,y)\leq\sum_{k=0}^N\overline{m}(t_k,t_{k+1},\gamma_0(t_k),\gamma_0(t_{k+1})),
\end{align*}
we obtain
\begin{align}
m^{\epsilon}(0,t,x,y)\geq\overline{m}(0,t,x,y)+\sum_{k=0}^N\left(E_k-\overline{E}_k-C\epsilon\right).\label{metriclowerbound}
\end{align}

Now, we estimate the errors $E_k,\overline{E}_k$. By Lemmas \ref{lem:coercivity}, \ref{lem:LbarLip}, we get
\begin{align*}
|E_k|&=\left|\int_{t_k}^{t_{k+1}}L\left(\gamma_0(s),\frac{\gamma_0(s)}{\epsilon},-\dot{\gamma_0}(s)\right)ds-\int_{t_k}^{t_{k+1}}L\left(x_k,\frac{\gamma_0(s)}{\epsilon},-\dot{\gamma_0}(s)\right)ds\right|\\
&\leq\int_{t_k}^{t_{k+1}}\mathrm{Lip}(H)|\gamma_0(s)-x_k|ds\\
&\leq\mathrm{Lip}(H)M_0\int_{t_k}^{t_{k+1}}|s-t_k|ds\\
&\leq\mathrm{Lip}(H)M_0(t_{k+1}-t_k)^2
\end{align*}
for each $k=0,\cdots,N$. By the same estimate, we also get $|\overline{E}_k|\leq\mathrm{Lip}(H)M_0(t_{k+1}-t_k)^2$ for each $k=0,\cdots,N$.

Set $N=\left\lfloor\frac{t}{\sqrt{\epsilon}}\right\rfloor$ and $t_k=k\sqrt{\epsilon}$ for each $k=0,\cdots,N$. With this choice of division, it holds that $t_{k+1}-t_k\leq\sqrt{\epsilon}$ for all $k=0,\cdots,N$. Note that $t_N=t_{N+1}=t$ when $\frac{t}{\sqrt{\epsilon}}$ is a positive integer. If $t\in(0,\sqrt{\epsilon})$, then $N=0$, and thus,
\begin{align*}
\left|\sum_{k=0}^N\left(E_k-\overline{E}_k-C\epsilon\right)\right|\leq 2\mathrm{Lip}(H)M_0\epsilon+C\epsilon
\leq C\epsilon
\end{align*}
with $C=C(n,H,\|Dg\|_{L^{\infty}(\mathbb{R}^n)})>0$ changed to a larger constant in the last inequality. If $t\geq\sqrt{\epsilon}$, then $N+1\leq\frac{2t}{\sqrt{\epsilon}}$, and thus,
\begin{align*}
\left|\sum_{k=0}^N\left(E_k-\overline{E}_k-C\epsilon\right)\right|&\leq 2(N+1)\mathrm{Lip}(H)M_0\epsilon+(N+1)C\epsilon\\
&\leq 4\mathrm{Lip}(H)M_0t\sqrt{\epsilon}+2Ct\sqrt{\epsilon}\\
&\leq Ct\sqrt{\epsilon}
\end{align*}
with $C=C(n,H,\|Dg\|_{L^{\infty}(\mathbb{R}^n)})>0$ changed to a larger constant in the last inequality. In all cases, we obtain a desired lower bound by combining \eqref{lowerboundinm}, \eqref{metriclowerbound}.

\medskip

To prove an upper bound of $u^{\epsilon}(x,t)-u(x,t)$, we instead obtain a lower bound of $u(x,t)-u^{\epsilon}(x,t)$. Since Lemmas \ref{lem:metric}, \ref{lem:coercivity}, \ref{lem:LbarLip} are written entirely symmetric in $m^{\epsilon}$ and $\overline{m}$, $L$ and $\overline{L}$, the same arguments as the above (but swapping $u^{\epsilon}$ and $u$, $m^{\epsilon}$ and $\overline{m}$, $L$ and $\overline{L}$, respectively) also prove lower bounds $u(x,t)-u^{\epsilon}(x,t)\geq-Ct\sqrt{\epsilon}$ for $t\geq\sqrt{\epsilon}$ and $u(x,t)-u^{\epsilon}(x,t)\geq-C\sqrt{\epsilon}$ for $t\in(0,\sqrt{\epsilon})$.

\medskip

Finally, for $t\in(0,\sqrt{\epsilon})$, we apply the comparison principle to see that there exists a constant $C=C(H,\|Dg\|_{L^{\infty}(\mathbb{R}^n)})>0$ such that
\begin{align*}
|u^{\epsilon}(x,t)-g(x)|\leq Ct
\end{align*}
and
\begin{align*}
|u(x,t)-g(x)|\leq Ct.
\end{align*}
Therefore, there exists a constant $C=C(H,\|Dg\|_{L^{\infty}(\mathbb{R}^n)})>0$ such that
\begin{align*}
|u^{\epsilon}(x,t)-u(x,t)|\leq Ct,
\end{align*}
which yields \eqref{conclusion} together with the bounds $|u^{\epsilon}(x,t)-u(x,t)|\leq Ct\sqrt{\epsilon}$ for $t\geq\sqrt{\epsilon}$ and $|u^{\epsilon}(x,t)-u(x,t)|\leq C\epsilon$ for $t\in(0,\sqrt{\epsilon})$. This completes the proof.
\end{proof}

\section{Proof of Theorem \ref{thm:main2}}\label{sec:provethm2}
\subsection{Preliminaries} Let $u^{\epsilon}$ be the unique viscosity solution to \eqref{discountepsilon}, and let $u$ be the unique viscosity solution to \eqref{discountbar}. Then, we have the optimal control formulas for $u^{\epsilon}$ and $u$, that is,
\begin{equation}\label{eqn:ocfuestatic}
    \begin{aligned}
        u^\epsilon (x) = \inf \left\lbrace \int_0^{\infty} e^{-\lambda s} L\left(\gamma(s), \frac{\gamma(s)}{\epsilon} ,-\dot{\gamma}(s)\right) ds:\ \gamma(0)=x,\ \gamma\in \mathrm{AC}([0,T];\mathbb{R}^n),\ \textrm{for any $T>0$}\right\rbrace
    \end{aligned}
\end{equation}
and
\begin{equation}\label{eqn:ocfuebarstatic}
    \begin{aligned}
        u(x) = \inf \left\lbrace \int_0^{\infty} e^{-\lambda s} \overline{L} \left( \overline{\gamma}(s), -\dot{\overline{\gamma}}(s)\right) ds:\ \overline{\gamma}(0)=x,\ \overline{\gamma}\in \mathrm{AC}([0,T];\mathbb{R}^n),\ \textrm{for any $T>0$}\right\rbrace,
    \end{aligned}
\end{equation}
respectively.

We state the lemma about the boundedness of velocities of minizing curves for this problem, which corresponds to Lemma \ref{lem:coercivity}.

\begin{lemma}\label{lem:coercivitystatic}
Assume {\rm(H1)-(H3)}. Let $x\in \mathbb{R}^n$, and $\lambda,\epsilon>0$. Suppose that $\gamma:[0, \infty) \to \mathbb{R}^n$ is a minimizing curve of $u^\epsilon(x)$ in the sense that $\gamma\in \mathrm{AC}([0,T];\mathbb{R}^n)$ for any $T>0$, and
\begin{equation}
u^\epsilon (x) = \int_0^{\infty} e^{-\lambda s} L\left(\gamma(s), \frac{\gamma(s)}{\epsilon} ,-\dot{\gamma}(s)\right) ds
\end{equation}
with $\gamma\left(0\right)=x$.
Then, there exists a constant $M_0=M_0\left(H\right)>0$ such that $ \left\|\dot{\gamma}\right\|_{L^\infty([0,\infty))} \leq M_0$.
Similarly, if $\overline{\gamma}:[0,\infty)\to\mathbb
{R}^n$ is a minimizing curve of $u(x)$ in the sense that $\overline{\gamma}\in \mathrm{AC}([0,T];\mathbb{R}^n)$ for any $T>0$, and
\begin{equation}
u (x) = \int_0^{\infty} e^{-\lambda s} \overline{L} \left( \overline{\gamma}(s), -\dot{\overline{\gamma}}(s)\right) ds
\end{equation}
with $\overline{\gamma}(0)=x$, then there exists a constant $M_0=M_0\left(H\right)>0$ such that $ \left\|\dot{\overline{\gamma}}\right\|_{L^\infty([0,\infty))} \leq M_0$.
\end{lemma}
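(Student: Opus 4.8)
The plan is to establish the uniform Lipschitz bound on minimizing curves for the discounted problem by the same elementary comparison-of-competitors argument used for the Cauchy problem in Lemma \ref{lem:coercivity}, exploiting the quadratic two-sided bounds \eqref{K_0L} on $L$ and $\overline{L}$. The key observation is that these bounds are uniform in $x$ and $y$, so the argument will produce a constant $M_0$ depending only on $H$ (through $K_0$), with no dependence on $\lambda$ or $\epsilon$, as claimed.

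First I would derive an a priori bound on the solutions themselves. Using the trivial competitor $\gamma\equiv x$ in \eqref{eqn:ocfuestatic} together with the upper bound $L(x,x/\epsilon,0)\le K_0$ gives $u^\epsilon(x)\le \int_0^\infty e^{-\lambda s}K_0\,ds = K_0/\lambda$. For the lower bound, the lower estimate $L\ge |v|^2/2 - K_0 \ge -K_0$ yields $u^\epsilon(x)\ge -K_0/\lambda$; hence $\|u^\epsilon\|_{L^\infty}\le K_0/\lambda$, and the same for $u$ with $\overline{L}$. More importantly, for a minimizing curve $\gamma$ I would record the integrated energy estimate: since $\int_0^\infty e^{-\lambda s}\bigl(\tfrac12|\dot\gamma(s)|^2 - K_0\bigr)ds \le \int_0^\infty e^{-\lambda s}L(\gamma(s),\gamma(s)/\epsilon,-\dot\gamma(s))\,ds = u^\epsilon(x) \le K_0/\lambda$, we get $\int_0^\infty e^{-\lambda s}|\dot\gamma(s)|^2\,ds \le 4K_0/\lambda$. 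This controls the velocity in an averaged sense but not pointwise, which is the crux of the matter.

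The main obstacle is upgrading this averaged control to the pointwise bound $\|\dot\gamma\|_{L^\infty}\le M_0$. The standard route is a local-surgery / reparametrization argument: fix a Lebesgue point $s_0$ of $\dot\gamma$ with $|\dot\gamma(s_0)|$ large, and compare $\gamma$ on a short interval $[s_0, s_0+h]$ against the competitor that traverses the same spatial displacement $\gamma(s_0+h)-\gamma(s_0)$ at constant speed over a \emph{shorter} time, then waits; or, more simply for the infinite-horizon discounted problem, one compares against a curve that replaces a short bad arc with a geodesic-speed arc. Using $L\ge \tfrac12|v|^2 - K_0$ on the original arc and $L\le \tfrac12|v|^2 + K_0$ on the modified arc, and the fact that the discount factor $e^{-\lambda s}$ is comparable to $1$ on a short interval (so the $\lambda$-dependence washes out after dividing by the interval length and sending $h\to0$), one finds that if $|\dot\gamma(s_0)|$ exceeds a threshold determined purely by $K_0$, the modification strictly decreases the cost, contradicting minimality. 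This is exactly the argument for Lemma \ref{lem:coercivity} adapted to the exponential weight, and the weight only contributes harmless bounded multiplicative factors on short intervals.

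Finally, the argument for $\overline\gamma$ is verbatim the same, using the two-sided bound on $\overline{L}$ — which follows from \eqref{K_0L} since the effective Lagrangian $\overline{L}$ is sandwiched between the same quadratics (the effective Hamiltonian $\overline{H}$ inherits \eqref{K_0H}, hence $\overline L$ inherits \eqref{K_0L}). I would then simply remark that the proof is a routine modification of that of Lemma \ref{lem:coercivity} and omit the repetitive details, pointing to \cite{tran_hamilton-jacobi_2021} as is done for the Cauchy case, while emphasizing the one point that requires care: the constant $M_0$ must be checked to be independent of $\lambda$ and $\epsilon$, which is immediate because every estimate used only invokes the $x,y$-uniform quadratic bounds \eqref{K_0L} and short-interval comparability of $e^{-\lambda s}$.
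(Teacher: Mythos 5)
The paper does not give a proof of this lemma (it points to \cite{tran_hamilton-jacobi_2021}), so I evaluate your argument on its own. Your a priori $L^\infty$ bound and the integrated estimate $\int_0^\infty e^{-\lambda s}|\dot\gamma|^2\,ds\le 4K_0/\lambda$ are fine, and your overall plan --- obtain a pointwise bound via a competitor argument using only the $(x,y)$-uniform quadratic bounds \eqref{K_0L}, so that $M_0$ depends only on $H$ --- is correct in spirit. The gap is that neither of the two surgeries you actually describe closes the argument. Surgery (a), ``traverse the same displacement at constant speed over a \emph{shorter} time, then wait,'' puts the modified arc at a \emph{higher} speed than the original (same displacement, less time), so by \eqref{K_0L} the kinetic term strictly \emph{increases}; the competitor is more expensive, and there is no contradiction. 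Surgery (b), ``replace with a geodesic-speed arc'' read as the straight-line replacement, produces at a Lebesgue point $s_0$ a segment with essentially the same constant velocity $\approx\dot\gamma(s_0)$, so the comparison $L\ge\tfrac12|v|^2-K_0$ on the original against $L\le\tfrac12|v|^2+K_0$ on the modification, after dividing by $h$ and letting $h\to0$, only yields the vacuous $-K_0\le K_0$. In neither case do you get ``if $|\dot\gamma(s_0)|$ exceeds a $K_0$-threshold, the modification strictly decreases the cost.''

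Two routes actually close the argument, and your proposal contains the raw materials for both without assembling either. (i) The surgery that works for the discounted infinite horizon is the \emph{opposite} of (a): slow the curve down on $[s_0,s_0+h]$ (e.g.\ halve the speed, doubling the elapsed time) and shift the entire tail forward by the extra time. The shifted tail has cost multiplied by $e^{-\lambda\cdot(\text{extra time})}\le 1$, which is non-increasing provided you first normalize $L,\overline L\ge0$ by subtracting $M=\|H(\cdot,\cdot,0)\|_{L^\infty}$ from $H$ --- exactly the normalization the paper performs in the proof of Theorem \ref{thm:main2}. At a Lebesgue point this gives $\tfrac12|\dot\gamma(s_0)|^2-K_0\le\tfrac18|\dot\gamma(s_0)|^2+2K_0$ up to terms that vanish as $h\to0$, hence $|\dot\gamma(s_0)|\le C(K_0,M)$. (ii) More in line with how the Cauchy case (Lemma \ref{lem:coercivity}) is actually proved via \eqref{eqn:u_prior}: first get the uniform Lipschitz bound $\|Du^\epsilon\|_{L^\infty(\mathbb{R}^n)}\le C(H)$ directly from the PDE --- Lemma \ref{lem:Linfinitystatic} gives $|\lambda u^\epsilon|\le M$, so $|H(x,x/\epsilon,Du^\epsilon)|\le M$ in the viscosity sense, and \eqref{K_0H} forces $|Du^\epsilon|\le\sqrt{2(M+K_0)}$ --- and then combine the dynamic programming principle on $[s_0,s_0+h]$ with this Lipschitz bound, the $L^\infty$ bound, and Cauchy--Schwarz. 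Your claim that the Cauchy-case argument is ``exactly'' the surgery argument adapted to the weight is therefore a mischaracterization: the Cauchy case uses the value-function Lipschitz bound, which for the static problem you never established.
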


Also, by applying the comparison principle, we have the $L^{\infty}$-bound of $u^{\epsilon}$ and $u$.

\begin{lemma}\label{lem:Linfinitystatic}
Assume {\rm(H1)-(H3)}. Let $u^{\epsilon}$ be the unique viscosity solution to \eqref{discountepsilon}, and let $u$ be the unique viscosity solution to \eqref{discountbar}. Let $M:=\|H(\cdot,\cdot,0)\|_{L^{\infty}(\mathbb{R}^n\times\mathbb{T}^n)}$. Then,
$$
\|u^{\epsilon}\|_{L^{\infty}(\mathbb{R}^n)}\leq\frac{M}{\lambda}
$$
and
$$
\|u\|_{L^{\infty}(\mathbb{R}^n)}\leq\frac{M}{\lambda}.
$$
\end{lemma}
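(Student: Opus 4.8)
The plan is to obtain both bounds from the comparison principle for the discounted equations \eqref{discountepsilon} and \eqref{discountbar}, using the constant functions $\pm M/\lambda$ as barriers; the discount term $\lambda>0$ renders both equations proper, so the comparison principle for bounded sub- and supersolutions holds under (H1)--(H3) (see \cite{tran_hamilton-jacobi_2021}). Throughout, $u^{\epsilon}$ and $u$ are understood as the bounded (hence, by the coercivity (H2), Lipschitz) viscosity solutions, so each of them is simultaneously a sub- and a supersolution and may be compared with constants.

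First I would treat $u^{\epsilon}$. Put $w^{+}\equiv M/\lambda$ and $w^{-}\equiv -M/\lambda$. Since $Dw^{\pm}\equiv 0$ and $|H(x,y,0)|\le M$ for all $(x,y)$ by the definition of $M$, we get for every $x\in\mathbb{R}^n$
\[
\lambda w^{+}+H\!\left(x,\tfrac{x}{\epsilon},Dw^{+}\right)=M+H\!\left(x,\tfrac{x}{\epsilon},0\right)\ge M-M=0,
\]
\[
\lambda w^{-}+H\!\left(x,\tfrac{x}{\epsilon},Dw^{-}\right)=-M+H\!\left(x,\tfrac{x}{\epsilon},0\right)\le -M+M=0,
\]
so $w^{+}$ is a (classical, hence viscosity) supersolution and $w^{-}$ a subsolution of \eqref{discountepsilon}. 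Applying the comparison principle to $u^{\epsilon}$ against $w^{+}$ and to $w^{-}$ against $u^{\epsilon}$ yields $-M/\lambda\le u^{\epsilon}\le M/\lambda$ on $\mathbb{R}^n$.

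For $u$ the same barriers work provided $\|\overline{H}(\cdot,0)\|_{L^{\infty}(\mathbb{R}^n)}\le M$, and establishing this is the only step that is not a routine verification. To prove it, fix $x\in\mathbb{R}^n$ and let $v_x\in\mathrm{Lip}(\mathbb{T}^n)$ be a corrector, i.e. a viscosity solution of the cell problem $H(x,y,D_yv_x(y))=\overline{H}(x,0)$ on $\mathbb{T}^n$ (which exists under (H1)--(H3) by \cite{LionsPapaVara1987}). Testing the subsolution property of $v_x$ with a constant test function at a maximum point $y_0$ of $v_x$, and the supersolution property with a constant test function at a minimum point $y_1$, gives $H(x,y_0,0)\le\overline{H}(x,0)\le H(x,y_1,0)$, whence $|\overline{H}(x,0)|\le\max_{y\in\mathbb{T}^n}|H(x,y,0)|\le M$. (Alternatively, this bound follows by letting $\epsilon\to0$ in $\|u^{\epsilon}\|_{L^{\infty}}\le M/\lambda$ together with the qualitative homogenization of \cite{LionsPapaVara1987}.) With this in hand, $\pm M/\lambda$ are again a super- and a subsolution of \eqref{discountbar}, and the comparison principle gives $\|u\|_{L^{\infty}(\mathbb{R}^n)}\le M/\lambda$, completing the proof.

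The main obstacle, as noted, is the effective-Hamiltonian estimate $|\overline{H}(x,0)|\le M$; beyond that one only needs to be careful to quote a version of the comparison principle valid on all of $\mathbb{R}^n$ for bounded sub- and supersolutions, which is standard because $\lambda>0$.
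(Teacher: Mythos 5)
Your proof is correct and takes the same route the paper intends: the paper simply invokes the comparison principle, and you have supplied the (correct) details, including the required bound $\|\overline{H}(\cdot,0)\|_{L^\infty}\le M$, obtained from the cell problem by testing a Lipschitz corrector at its max and min on $\mathbb{T}^n$. The alternative you note in parentheses, passing to the limit $\epsilon\to0$ in $\|u^{\epsilon}\|_{L^\infty}\le M/\lambda$ using the qualitative homogenization result, is an equally valid and slightly shorter way to dispatch the $u$ case.
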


\subsection{Proof} We introduce the additional notations with the discount term for the proof of Theorem \ref{thm:main2}; for $x,y\in\mathbb{R}^n$, $\lambda,\epsilon>0$, $0\leq t_1\leq t_2<+\infty$, we let
\begin{align*}
m^{\epsilon}_{\lambda}(t_1,t_2,x,y)&:=\inf\left\{\int_{t_1}^{t_2}e^{-\lambda s}L\left(\gamma(s),\frac{\gamma(s)}{\epsilon},-\dot{\gamma}(s)\right)ds\ :\ \gamma\in\Gamma(t_1,t_2,x,y)\right\},\\
\overline{m}_{\lambda}(t_1,t_2,x,y)&:=\inf\left\{\int_{t_1}^{t_2}e^{-\lambda s}\overline{L}(\overline{\gamma}(s),-\dot{\overline{\gamma}}(s))ds\ :\ \overline{\gamma}\in\Gamma(t_1,t_2,x,y)\right\}.
\end{align*}
Now we prove Theorem \ref{thm:main2}.

\begin{proof}[Proof of Theorem \ref{thm:main2}]
Let $M:=\|H(\cdot,\cdot,0)\|_{L^{\infty}(\mathbb{R}^n\times\mathbb{T}^n)}$. Let $H^M(x,y,p):=H(x,y,p)-M$ and $\overline{H}^M$ be its effective Hamiltonian, which coincides with $\overline{H}-M$. Then, $u^{\epsilon}_M:=u^{\epsilon}+\frac{M}{\lambda}$ ($u_M:=u+\frac{M}{\lambda}$, resp.) is the unique viscosity solution to
$$
\lambda u^{\epsilon}_M+H^M\left(x,\frac{x}{\epsilon},Du^{\epsilon}_M\right)=0\ \ \ \ \ \ \left(\lambda u_M+\overline{H}^M\left(x,Du_M\right)=0,\ \textrm{resp.}\right).
$$
The additional property of the Hamiltonian $H^M$ is that its Lagrangian and effective Lagrangian are nonnegative. Since $u^{\epsilon}_M-u_M=u^{\epsilon}-u$, it suffices to prove that $\|u^{\epsilon}_M-u_M\|_{L^{\infty}(\mathbb{R}^n)}\leq \frac{C\sqrt{\epsilon}}{\lambda}.$ Therefore, it suffices to prove that $\|u^{\epsilon}-u\|_{L^{\infty}(\mathbb{R}^n)}\leq \frac{C\sqrt{\epsilon}}{\lambda}$ when $L,\overline{L}\geq0$, which we assume from now on without loss of generality.

\medskip

Let $x\in\mathbb{R}^n$, and let $\lambda,\epsilon\in(0,1)$. The goal is to prove $u^{\epsilon}(x)-u(x)\geq-\frac{C\sqrt{\epsilon}}{\lambda}$ for some constant $C=C(n,H)>0$. Let $\gamma_0\in\mathrm{AC}([0,+\infty);\mathbb{R}^n)$ be a curve such that with $\gamma_0(0)=x$, $\gamma_0\in\mathrm{AC}([0,T];\mathbb{R}^n)$ for any $T>0$, and
\begin{equation*}
u^\epsilon (x) = \int_0^{\infty} e^{-\lambda s} L\left(\gamma_0(s), \frac{\gamma_0(s)}{\epsilon} ,-\dot{\gamma_0}(s)\right) ds.
\end{equation*}
Consider a partition
$$
0=t_0< t_1< t_2< \cdots< t_k< t_{k+1}<\cdots
$$
of the interval $[0,+\infty)$ with $t_k\to+\infty$ as $k\to+\infty$, which will be determined later.

\medskip

Step 1: Freeze the spatial variable.

\medskip

For each $k=0,1,2,\cdots,$ let $x_k:=\gamma_0(t_k)$. Then, for each $k=0,1,2,\cdots,$
$$
m^{\epsilon}_{\lambda}(t_k,t_{k+1},\gamma_0(t_k),\gamma_{0}(t_{k+1}))=\int_{t_k}^{t_{k+1}}e^{-\lambda s}L\left(x_k,\frac{\gamma_0(s)}{\epsilon},-\dot{\gamma_0}(s)\right)ds+E_k,
$$
where
$$
E_k:=\int_{t_k}^{t_{k+1}}e^{-\lambda s}L\left(\gamma_0(s),\frac{\gamma_0(s)}{\epsilon},-\dot{\gamma_0}(s)\right)ds-\int_{t_k}^{t_{k+1}}e^{-\lambda s}L\left(x_k,\frac{\gamma_0(s)}{\epsilon},-\dot{\gamma_0}(s)\right)ds.
$$

\medskip

Step 2: Homogenize in the oscillatory variable.

\medskip

We apply Lemma \ref{lem:metric} and Lemma \ref{lem:coercivitystatic} to see that there exists a constant $C=C(n,H)>0$ such that
\begin{align*}
\int_{t_k}^{t_{k+1}}e^{-\lambda s}L\left(x_k,\frac{\gamma_0(s)}{\epsilon},-\dot{\gamma_0}(s)\right)ds&\geq e^{-\lambda t_{k+1}}\int_{t_k}^{t_{k+1}}L\left(x_k,\frac{\gamma_0(s)}{\epsilon},-\dot{\gamma_0}(s)\right)ds\\
&\geq e^{-\lambda t_{k+1}}m^{\epsilon}_{x_k}(t_k,t_{k+1},\gamma_0(t_k),\gamma_0(t_{k+1}))\\
&\geq e^{-\lambda t_{k+1}}\overline{m}_{x_k}(t_k,t_{k+1},\gamma_0(t_k),\gamma_0(t_{k+1}))-Ce^{-\lambda t_{k+1}}\epsilon.
\end{align*}
for each $k=0,1,2,\cdots$.

\medskip

Step 3: Unfreeze the spatial variable.

\medskip

For each $k=0,1,2,\cdots$, let $\overline{\gamma}_k:[t_k,t_{k+1}]\to\mathbb{R}^n$ be an absolutely continuous curve with $\overline{\gamma}_k(t_k)=\gamma_0(t_k),\ \overline{\gamma}_{k}(t_{k+1})=\gamma_0(t_{k+1})$ such that
\begin{align*}
\overline{m}_{x_k}(t_k,t_{k+1},\gamma_0(t_k),\gamma_0(t_{k+1}))=\int_{t_k}^{t_{k+1}}\overline{L}(x_k,-\dot{\overline{\gamma}}_k(s))ds.
\end{align*}
Then,
\begin{align*}
e^{-\lambda t_{k+1}}\overline{m}_{x_k}(t_k,t_{k+1},\gamma_0(t_k),\gamma_0(t_{k+1}))&=e^{-\lambda t_{k+1}}\int_{t_k}^{t_{k+1}}\overline{L}(\overline{\gamma}_k(s),-\dot{\overline{\gamma}}_k(s))ds-e^{-\lambda t_{k+1}}\overline{E}_k\\
&\geq e^{-\lambda (t_{k+1}-t_k)}\overline{m}_{\lambda}(t_k,t_{k+1},\gamma_0(t_k),\gamma_0(t_{k+1}))-e^{-\lambda t_{k+1}}\overline{E}_k,
\end{align*}
where
\begin{align*}
\overline{E}_k:=\int_{t_k}^{t_{k+1}}\overline{L}(\overline{\gamma}_k(s),-\dot{\overline{\gamma}}_k(s))ds-\int_{t_k}^{t_{k+1}}\overline{L}(x_k,-\dot{\overline{\gamma}}_k(s))ds
\end{align*}
for each $k=0,1,2,\cdots$.

\medskip

Step 4: Estimate the errors $E_k,\overline{E}_k$ and obtain a lower bound.

\medskip

From Steps 1-3, we have that for each $k=0,1,2,\cdots$,
\begin{align*}
m^{\epsilon}_{\lambda}(t_k,t_{k+1},\gamma_0(t_k),\gamma_0(t_{k+1}))\geq e^{-\lambda (t_{k+1}-t_k)}\overline{m}_{\lambda}(t_k,t_{k+1},\gamma_0(t_k),\gamma_0(t_{k+1}))+E_k-e^{-\lambda t_{k+1}}\overline{E}_k-Ce^{-\lambda t_{k+1}}\epsilon.
\end{align*}
Since
$$
u^{\epsilon}(x)=\sum_{k=0}^{\infty}m^{\epsilon}_{\lambda}(t_k,t_{k+1},\gamma_0(t_k),\gamma_0(t_{k+1}))
$$
and
$$
u(x)\leq\sum_{k=0}^{\infty}\overline{m}_{\lambda}(t_k,t_{k+1},\gamma_0(t_k),\gamma_0(t_{k+1})),
$$
we obtain
\begin{align}
u^{\epsilon}(x)\geq e^{-\lambda\sup_{k\geq0}(t_{k+1}-t_k)}u(x)+\sum_{k=0}^{\infty}\left(E_k-e^{-\lambda t_{k+1}}\overline{E}_k-Ce^{-\lambda t_{k+1}}\epsilon\right).\label{metriclowerboundstatic}
\end{align}

We now estimate the errors $E_k,\overline{E}_k$. By Lemmas \ref{lem:LbarLip}, \ref{lem:coercivitystatic}, we get
\begin{align*}
|E_k|&=\left|\int_{t_k}^{t_{k+1}}e^{-\lambda s}L\left(\gamma_0(s),\frac{\gamma_0(s)}{\epsilon},-\dot{\gamma_0}(s)\right)ds-\int_{t_k}^{t_{k+1}}e^{-\lambda s}L\left(x_k,\frac{\gamma_0(s)}{\epsilon},-\dot{\gamma_0}(s)\right)ds\right|\\
&\leq e^{-\lambda t_k}\int_{t_k}^{t_{k+1}}\mathrm{Lip}(H)|\gamma_0(s)-x_k|ds\\
&\leq e^{-\lambda t_k}\mathrm{Lip}(H)M_0(t_{k+1}-t_k)^2
\end{align*}
for each $k=0,1,2,\cdots$. Similarly, we also get $|\overline{E}_k|\leq\mathrm{Lip}(H)M_0(t_{k+1}-t_k)^2$ for each $k=0,1,2,\cdots$.

Set $t_k=k\sqrt{\epsilon}$ for each $k=0,1,2,\cdots$. Then, from \eqref{metriclowerboundstatic}, we have
\begin{align*}
u^{\epsilon}(x)\geq e^{-\lambda\sqrt{\epsilon}}u(x)-2\mathrm{Lip}(H)M_0\frac{\sqrt{\epsilon}}{\lambda}\sum_{k=0}^{\infty}\lambda \sqrt{\epsilon}e^{-k\lambda\sqrt{\epsilon}}-C\frac{\sqrt{\epsilon}}{\lambda}\sum_{k=0}^{\infty}\lambda \sqrt{\epsilon}e^{-k\lambda\sqrt{\epsilon}}.
\end{align*}

Also, by Lemma \ref{lem:Linfinitystatic},
$$
e^{-\lambda\sqrt{\epsilon}}u(x)-u(x)=-\left(1-e^{-\lambda\sqrt{\epsilon}}\right)u(x)\geq-\lambda\sqrt{\epsilon}u(x)\geq-M\sqrt{\epsilon},
$$
where $M:=\|H(\cdot,\cdot,0)\|_{L^{\infty}(\mathbb{R}^n\times\mathbb{T}^n)}$. By the elementary fact that $\sum_{k=0}^{\infty}\lambda \sqrt{\epsilon}e^{-k\lambda\sqrt{\epsilon}}$ is bounded by a universal constant for $\lambda,\epsilon\in(0,1)$, we have
$$
u^{\epsilon}(x)\geq u(x)-\frac{C\sqrt{\epsilon}}{\lambda}
$$
for some constant $C=C(n,H)>0$, as desired.

\medskip

To prove an upper bound of $u^{\epsilon}(x)-u(x)$, we instead obtain a lower bound of $u(x)-u^{\epsilon}(x)$ by interchanging $u^{\epsilon}$ and $u$, $m^{\epsilon}$ and $\overline{m}$, $L$ and $\overline{L}$, respectively, in the above arguments, which yields $u(x)\geq u^{\epsilon}(x)-\frac{C\sqrt{\epsilon}}{\lambda}$.
\end{proof}

\section{Examples}\label{sec:example}
In this section, we first establish the optimality of \eqref{conclusion} in Theorem \ref{thm:main}. We then provide an example to illustrate the necessity of condition (H4) for Theorem \ref{thm:main}. Finally, we present an example that demonstrates the necessity of involving the time variable $t$ in \eqref{conclusion}.

The following proposition demonstrates the optimality of the bound \eqref{conclusion}. We consider two cases: when $0<t < \sqrt{\epsilon}$ and $t\geq \sqrt{\epsilon}$. For $0 < t <\sqrt{\epsilon}$, the optimality of the rate of convergence in \eqref{conclusion} is evident from \eqref{lowerbound1}. Moreover, \eqref{lowerbound1} implies that the rate of convergence in \eqref{conclusion} is also optimal for $t=C \sqrt{\epsilon}$, where $C>1$, and in particular, for $t \geq \sqrt{\epsilon}$. This example is discussed in \cite[Proposition 4.3]{MitakeTranYu2019}, and it shows that the rate $O(\epsilon)$ is optimal when the Hamiltonian $H=H(y,p)$ depends solely on the oscillatory variable and momentum.

\begin{proposition}\label{prop:opst}
Consider the case where $n=1$, $H(y,p)=-V(y)+\frac{1}{2}p^2$ for a given continuous function $V\in C(\mathbb{T})$ with $\min_{\mathbb{T}}V=0$ and $V \geq 1$ on $[-3^{-1},3^{-1}]$, and $g \equiv 0$. For $\epsilon>0$, let $u^{\epsilon}$ be the solution to \eqref{eqn:ms}, and let $u$ be the solution to \eqref{eqn:eh}. Then, for $\epsilon\in(0,1)$ and $t>0$,
\begin{equation}\label{lowerbound1}
u^{\epsilon}(0,t)-u(0,t)\geq \frac{\sqrt{2}}{3}\min\left\{t,\epsilon\right\}.
\end{equation}
\end{proposition}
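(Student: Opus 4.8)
The plan is to compute both $u^\epsilon(0,t)$ and $u(0,t)$ explicitly using the optimal control formulas \eqref{eqn:ocfue} and \eqref{eqn:ocfuebar}, and then compare. First I would identify the effective Hamiltonian: since $H(y,p) = -V(y) + \tfrac12 p^2$ with $\min_{\mathbb T} V = 0$, the classical formula for the effective Hamiltonian of a mechanical Hamiltonian gives $\overline H(p) = \tfrac12 p^2$ for $|p|$ small (more precisely, $\overline H(0) = -\min_{\mathbb T}(-V) = 0$, and $\overline H(p) = \tfrac12 p^2$ whenever the level $\{H=\overline H(p)\}$ forces the orbit to wind, i.e. for $|p|$ not too small one has the usual action formula; but near $p=0$ one gets $\overline H \equiv 0$ on an interval). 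Consequently $\overline L(v) = \tfrac12 v^2$ for $|v|$ large and $\overline L(0) = 0$, and in particular $\overline L(v) \geq \tfrac12 v^2$ fails near $0$ — rather $\overline L(v) = \max\{\tfrac12 v^2 - c_0|v|\cdot(\dots)\}$; the key point I actually need is just the lower bound $\overline L \geq 0$ together with $\overline L(0)=0$, which gives immediately $u(0,t) = \inf_{\overline\gamma}\int_0^t \overline L(-\dot{\overline\gamma})\,ds \leq 0$ by taking the constant curve, and since $\overline L \geq 0$ we get $u(0,t) = 0$.

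Next I would bound $u^\epsilon(0,t)$ from below. Given any admissible curve $\gamma$ with $\gamma(0)=0$ (recall $g\equiv 0$ so only the running cost matters), write the cost as $\int_0^t\left(\tfrac12\dot\gamma(s)^2 + V(\gamma(s)/\epsilon)\right)ds$. Since $V \geq 0$ everywhere and $V \geq 1$ on $[-1/3,1/3] \pmod 1$, a curve starting at $0$ must pay: either it stays in the "potential well region" near lattice points — but near $s=0$ the rescaled position $\gamma(0)/\epsilon = 0$ sits in the bad set $[-1/3,1/3]$, so for small $s$ we pay $V \approx 1$; or it moves quickly out, paying kinetic energy. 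Quantitatively, for the curve to leave the set $\{y : \operatorname{dist}(y,\mathbb Z)\leq 1/3\}$ (measured in the $y=x/\epsilon$ variable) it must travel a distance $\epsilon/3$ in $x$, which by Cauchy–Schwarz costs at least $\tfrac{(\epsilon/3)^2}{2\tau}$ in kinetic energy over a time interval of length $\tau$; while remaining inside costs at least $\tau$ from the potential. Optimizing $\tfrac{\epsilon^2}{18\tau} + \tau$ over $\tau$ — or more cleanly, just using $\tfrac12\dot\gamma^2 + V(\gamma/\epsilon) \geq$ (a lower bound obtained by comparing with the one-well problem) — yields the bound $u^\epsilon(0,t) \geq c\min\{t,\epsilon\}$. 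The cleanest route: over the time it takes $\gamma$ to first reach $|\gamma| = \epsilon/3$, call it $\tau_0$ (set $\tau_0 = t$ if this never happens and $\tau_0 = \min\{\tau_0,t\}$ otherwise), the potential contributes $\geq \tau_0$ if $\tau_0 < t$ is small, but actually I want to extract exactly $\tfrac{\sqrt2}{3}$; so I would instead argue: on $[0,\min\{t,\epsilon/(3M_0)\}]$ or a similarly chosen interval the curve is confined and one balances kinetic versus potential to get the constant $\tfrac{\sqrt 2}{3}$. Combining $u^\epsilon(0,t) \geq \tfrac{\sqrt2}{3}\min\{t,\epsilon\}$ with $u(0,t) = 0$ gives \eqref{lowerbound1}.

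The main obstacle I anticipate is twofold: first, pinning down that $\overline H(0) = 0$ and hence $u(0,t) = 0$ requires knowing the effective Hamiltonian near $p=0$, which one gets from the cell problem $\tfrac12 |v'|^2 - V(y) = \overline H(0)$ having a (viscosity) solution iff $\overline H(0) \geq \max(-V) = 0$, combined with $\overline H \geq H(\cdot,0)$-type lower bounds — this is standard weak KAM but needs to be stated carefully. Second, and more delicate, is getting the \emph{sharp} constant $\tfrac{\sqrt2}{3}$ rather than just \emph{some} positive constant: this forces a careful choice of the comparison interval and a tight Cauchy–Schwarz/AM–GM step, using precisely that the well has width $2/3$ (so half-width $1/3$) in the $y$ variable and that $V \geq 1$ there, with the $\sqrt2$ coming from $\sqrt{2 \cdot \tfrac12} $ in converting $\tfrac12 v^2 + 1 \geq \sqrt2\, |v|$ via AM–GM and then integrating the speed to get displacement $\epsilon/3$. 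I would carry this out by fixing the first time $\tau$ the curve's kinetic+potential integral is controlled, splitting into the case $t \leq \epsilon$ (curve likely confined, potential term dominates, bound $\tfrac{\sqrt2}{3} t$) and $t > \epsilon$ (curve must exit the well, AM–GM on the exit segment gives $\tfrac{\sqrt2}{3}\epsilon$), and in both cases discarding the nonnegative remainder of the cost.
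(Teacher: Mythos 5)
Your proposal is correct and follows essentially the same route as the paper: identify $\overline L\geq 0$ with $\overline L(0)=0$ so that $u(0,t)=0$, then bound $u^\epsilon(0,t)$ from below by the rescaled control formula and split into the confined case (potential $\geq 1$ on the well gives $\geq t$) and the exit case (Cauchy--Schwarz on the kinetic term over the first exit time $s_1$ plus AM--GM on $s_1+\tfrac{1}{18s_1}$ gives $\geq\tfrac{\sqrt2}{3}\epsilon$). The digressions about $\overline H(p)$ for general $p$ are unnecessary and contain a minor slip (you only need $\overline H(0)=0$), but the core argument matches the paper's.
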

\begin{proof}
Due to the optimal control formula, and we have
$$
u^{\epsilon}(0,t)=\inf\left\{\epsilon\int_0^{\frac{t}{\epsilon}}V(\eta(s))+\frac{1}{2}|\dot{\eta}(s)|^2ds\ :\ \eta\in\mathrm{AC}\left(\left[0,\frac{t}{\epsilon}\right]\right),\ \eta(0)=0\right\}.
$$

Let $\eta\in\mathrm{AC}\left(\left[0,\frac{t}{\epsilon}\right]\right)$ with $\eta(0)=0$. If $\eta\left(\left[0,\frac{t}{\epsilon}\right]\right)\subset\left[-\frac{1}{3},\frac{1}{3}\right]$, then
$$
\epsilon\int_0^{\frac{t}{\epsilon}}V(\eta(s))+\frac{1}{2}|\dot{\eta}(s)|^2ds\geq\epsilon\int_0^{\frac{t}{\epsilon}}V(\eta(s))ds\geq t.
$$

If not, without loss of generality, we may assume that there exists $s_1\in\left(0,\frac{t}{\epsilon}\right)$ such that $\eta(s_1)=\frac{1}{3}$ and that $\eta([0,s_1))\subset\left(-\frac{1}{3},\frac{1}{3}\right)$. Then,
\begin{align*}
\epsilon\int_0^{\frac{t}{\epsilon}}V(\eta(s))+\frac{1}{2}|\dot{\eta}(s)|^2ds&\geq\epsilon\left(\int_0^{s_1}V(\eta(s))ds+\frac{1}{2}\int_0^{s_1}|\dot{\eta}(s)|^2ds\right)\\
&\geq\epsilon\left(s_1+\frac{1}{2s_1}\left|\int_0^{s_1}\dot{\eta}(s)ds\right|^2\right)=\epsilon\left(s_1+\frac{1}{18s_1}\right)\geq\frac{\sqrt{2}}{3}\epsilon.
\end{align*}
Since $u^{\epsilon}$ converges to $u\equiv0$ locally uniformly on $\mathbb{R}\times[0,\infty)$, we obtain \eqref{lowerbound1}.
\end{proof}

The following example explains why the assumption (H4) is needed.
\begin{proposition}\label{exp:nolip}
 Consider the Hamiltonian $H : \mathbb{R} \times \mathbb{T} \times \mathbb{R} \to \mathbb{R}$ defined by
\[
H(x,y,p):=-f(x)-W(y)+\frac{|p|^2}{2}
\]
where $f: \mathbb{R} \to \mathbb{R}$ is defined by
\[
f(x):= \left\{
\begin{aligned}
&|x|^\frac{1}{4}, \quad \text{if } |x| \leq 1,\\
& 1, \qquad \, \text{if } |x| > 1,
\end{aligned}
\right.\] and
$W : \mathbb{T} \to \mathbb{R}$ is defined by
\[
W(y)=\frac{1}{2}-|y|, \quad \text{ if } y \in \left[-\frac{1}{2}, \frac{1}{2}\right].
\]
 
Then, for $\epsilon\in(0,2^{-80})$, the corresponding solutions $u^\epsilon$ to \eqref{eqn:ms} and $u$ to \eqref{eqn:eh} with $g \equiv 0$ satisfy  
\[
u^\epsilon \left(0, 1\right)-u \left(0, 1\right) \geq \frac{1}{32}\epsilon^\frac{1}{4}.
\]
\end{proposition}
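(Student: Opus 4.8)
The plan is to use the optimal control representations \eqref{eqn:ocfue} and \eqref{eqn:ocfuebar}. Since $H(x,y,p)=-f(x)-W(y)+\tfrac12|p|^2$, its Legendre transform in $p$ is $L(x,y,v)=\tfrac12|v|^2+f(x)+W(y)$, so with $g\equiv0$,
\[
u^\epsilon(0,1)=\inf\Bigl\{\int_0^1\Bigl(\tfrac12|\dot\gamma(s)|^2+f(\gamma(s))+W\bigl(\tfrac{\gamma(s)}{\epsilon}\bigr)\Bigr)\,ds:\ \gamma\in\mathrm{AC}([0,1];\mathbb R),\ \gamma(0)=0\Bigr\}.
\]
As $f,W\ge0$, the kinetic term may be dropped, and the lower bound reduces to the pointwise inequality
\[
f(z)+W\bigl(\tfrac{z}{\epsilon}\bigr)\ \ge\ (\epsilon/4)^{1/4}\qquad\text{for every }z\in\mathbb R,
\]
valid for small $\epsilon$; integrating over $s\in[0,1]$ gives $u^\epsilon(0,1)\ge(\epsilon/4)^{1/4}=2^{-1/2}\epsilon^{1/4}$. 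Together with $u(0,1)=0$, proved below, and $2^{-1/2}\ge\tfrac1{32}$, this yields the claim.

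For the pointwise inequality, write $W(y)=\tfrac12-\mathrm{dist}(y,\mathbb Z)$ and split into two cases. If $W(z/\epsilon)\ge\tfrac14$, then $f(z)+W(z/\epsilon)\ge\tfrac14\ge(\epsilon/4)^{1/4}$, the last inequality holding once $\epsilon\le 2^{-6}$. If $W(z/\epsilon)<\tfrac14$, then $\mathrm{dist}(z/\epsilon,\mathbb Z)>\tfrac14$; in particular $z/\epsilon$ lies at distance more than $\tfrac14$ from $0$, i.e. $|z|>\epsilon/4$, so $f(z)=\min\{|z|^{1/4},1\}\ge(\epsilon/4)^{1/4}$. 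This is the heart of the matter: the microscale wells of $W(\cdot/\epsilon)$ nearest to the origin sit at distance $\ge\epsilon/4$, where the macroscale potential $f(z)\approx|z|^{1/4}$ is already of size $\approx\epsilon^{1/4}$, so a curve starting at $0$ cannot avoid paying $\gtrsim\epsilon^{1/4}$ per unit time — this is precisely where the $\tfrac14$ exponent of $f$ becomes the $\tfrac14$ power of $\epsilon$.

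It remains to show $u(0,1)=0$. By \eqref{eqn:ocfuebar} with $g\equiv0$, $u(0,1)=\inf\{\int_0^1\overline L(\overline\gamma,-\dot{\overline\gamma})\,ds:\ \overline\gamma(0)=0\}$ with $\overline L=\overline H^{*}$. Since the constant $v\equiv0$ is a subsolution of the cell problem with constant $\max_yH(x,y,0)=-f(x)-\min_yW=-f(x)$, we get $\overline H(x,0)\le-f(x)$, hence $\overline L(x,v)\ge-\overline H(x,0)\ge f(x)\ge0$, so $u(0,1)\ge0$. Conversely, the constant curve $\overline\gamma\equiv0$ gives $u(0,1)\le\overline L(0,0)$; as $f(0)=0$, at $x=0$ the Hamiltonian is the mechanical Hamiltonian $H(0,y,p)=\tfrac12p^2-W(y)$, and the classical fact that the effective Hamiltonian of $\tfrac12|p|^2+V(y)$ at momentum $p=0$ equals $\max_yV$ gives $\overline H(0,0)=\max_y(-W(y))=-\min_yW=0$; since $\overline H(0,\cdot)$ is convex and even, $\min_p\overline H(0,p)=0$, so $\overline L(0,0)=-\min_p\overline H(0,p)=0$. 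Hence $u(0,1)=0$, and $u^\epsilon(0,1)-u(0,1)\ge(\epsilon/4)^{1/4}\ge\tfrac1{32}\epsilon^{1/4}$.

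The one delicate point is the lower half $\overline H(0,0)\ge0$ of the effective-Hamiltonian computation, i.e. the existence of a periodic viscosity supersolution of $H(0,y,D\psi)\ge0$; this is supplied by the explicit $\psi$ with $\psi'(y)=\pm\sqrt{2W(y)}$ and the sign flipped at $y=0$, which has a concave corner at $0$ (so the supersolution test is vacuous there) and satisfies the equation with equality elsewhere. Alternatively, one can bypass the cell-problem computation entirely: the competitor that moves from $0$ to the well $\epsilon/2$ in time $O(\epsilon)$ and then rests gives $u^\epsilon(0,1)\le(\epsilon/2)^{1/4}+O(\epsilon)\to0$, so $u(0,1)=\lim_\epsilon u^\epsilon(0,1)=0$ by the known local uniform convergence. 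All remaining steps are routine; note the argument in fact yields the constant $2^{-1/2}$, far larger than the stated $\tfrac1{32}$, and uses only $\epsilon\le2^{-6}$, so the hypothesis $\epsilon<2^{-80}$ is very comfortably satisfied.
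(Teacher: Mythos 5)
Your proof is correct and follows essentially the same route as the paper's: the identical dichotomy --- either $W(z/\epsilon)$ is bounded below by a constant, or $z$ lies at distance $\gtrsim\epsilon$ from the origin so that $f(z)\gtrsim\epsilon^{1/4}$ --- drives the lower bound on $u^\epsilon(0,1)$, and $u(0,1)=0$ comes from the classical computation of the effective Hamiltonian of $\tfrac12 p^2-W(y)$ exactly as in the paper. Your packaging of the dichotomy as a curve-free pointwise lower bound on $z\mapsto f(z)+W(z/\epsilon)$ is a minor but genuine improvement, since it sidesteps the paper's unsubstantiated assertion that the minimizer of $u^\epsilon(0,1)$ stays in $[-\epsilon/2,\epsilon/2]$ and yields the sharper constant $2^{-1/2}$ under the weaker restriction $\epsilon\le 2^{-6}$.
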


\begin{proof}
Let $H_1:\mathbb{T} \times \mathbb{R} \to \mathbb{R}$ be defined by $H_1(y,p):=-W(y)+\frac{|p|^2}{2}$. Then, the effective Hamiltonian $\overline{H}_1$ of $H_1$ is 
\[
\overline{H}_1(p):=
\left\{ \begin{aligned}
&0, \qquad \text{ if } |p| \leq \frac{2}{3}\\
& \lambda, \qquad \text{ if } |p| \geq \frac{2}{3}, \text{ where } \lambda >0 \text{ is a solution of } 2\sqrt{2} \int_0^\frac{1}{2} \sqrt{ \lambda + \frac{1}{2} - y} \, dy =|p|.
\end{aligned}
\right.
\]
In particular, for the Legendre transform of $\overline{H}_1$, denoted by $\overline{L}_1$, we know $\overline{L}_1(0)=0$ (see \cite{LionsPapaVara1987}, \cite{tran_hamilton-jacobi_2021}). 
The effective Hamiltonian $\overline{H}$ of $H$ is 
\[\overline{H}(x, p)= -f(x)+\overline{H}_1(p).\]
Hence, the optimal control formula of $u$ is
\begin{equation*}
    \begin{aligned}
        u(x, t) = \inf \left\lbrace \int_0^t f\left(\eta(s)\right) +\overline{L}_1\left(\dot{\eta}(s)\right) ds : \eta\in \mathrm{AC}([0,t];\mathbb{R}),\ \eta(t) = x\right\rbrace,
    \end{aligned}
\end{equation*}
which implies $u(0,1)=0$. 

Let $\gamma: \left[0, \frac{1}{\epsilon}\right] \to \mathbb{R}$ be a minimizing curve of $u^\epsilon(0,1)$ such that
\[u^\epsilon(0,1)=\epsilon\int_0^\frac{1}{\epsilon} \left(f\left(\epsilon\gamma(s)\right) + W(\gamma(s))+\frac{\left|\dot{\gamma}(s)\right|^2}{2}\right)ds
\]
with $\gamma(0)=0$. Note that $\gamma\left([0,\frac{1}{\epsilon}]\right) \subset \left[-\frac{1}{2}, \frac{1}{2}\right]$. 
Consider the following two cases.
\begin{enumerate}
    \item For any $s \in \left[0,\frac{1}{\epsilon}\right]$ such that $\gamma\left(s\right) \in \left[-\frac{1}{2}+\epsilon^\frac{1}{4}, \frac{1}{2}-\epsilon^\frac{1}{4}\right]$, there holds 
    \[f\left(\epsilon\gamma(s)\right) + W(\gamma(s))\geq \frac{1}{2} - \left(\frac{1}{2}-\epsilon^\frac{1}{4}\right)=\epsilon^\frac{1}{4}.
    \]
    \item For any $s \in \left[0,\frac{1}{\epsilon}\right]$ such that $\gamma\left(s\right) \in \left[-\frac{1}{2}, -\frac{1}{2}+\epsilon^\frac{1}{4}\right] \cup \left[\frac{1}{2}-\epsilon^\frac{1}{4}, \frac{1}{2}\right]$, there holds
    \[f\left(\epsilon\gamma(s)\right) + W(\gamma(s))\geq \left(\epsilon\left(\frac{1}{2} - \epsilon^\frac{1}{4} \right)\right)^\frac{1}{4}=\frac{1}{16}\epsilon^\frac{1}{4}-\epsilon^\frac{5}{16} \geq \frac{1}{32} \epsilon^\frac{1}{4},\]
    if $\epsilon\in(0,2^{-80})$.
\end{enumerate}

Therefore, 
\[
u^\epsilon(0,1)=\epsilon\int_0^\frac{1}{\epsilon} \left(f\left(\epsilon\gamma(s)\right) + W(\gamma(s))+\frac{\left|\dot{\gamma}(s)\right|^2}{2}\right)ds \geq \frac{1}{32} \epsilon^\frac{1}{4},
\]
that is,
\[u^\epsilon(0,1)-u(0,1) \geq \frac{1}{32} \epsilon^\frac{1}{4}.\]
\end{proof}

Finally, we illustrate the necessity of involving the time variable $t$ in \eqref{conclusion} and the discount coefficient $\lambda$ in \eqref{conclusion2} in the following example.

\begin{proposition}\label{exp:time}
Let $n=1$ and $H(x,y,p)=-f(x)-W(y)+\frac{1}{2}p^2$, where $f: \mathbb{R} \to \mathbb{R}$ is defined by
\[
f(x):= \left\{
\begin{aligned}
&|x|, \ \ \quad \text{if } |x| \leq 1,\\
& 1, \hspace{0.5mm}\qquad \, \text{if } |x| > 1,
\end{aligned}
\right.\] and
$W : \mathbb{T} \to \mathbb{R}$ is defined by
\[
W(y)=\frac{1}{2}-|y|, \quad \text{ if } y \in \left[-\frac{1}{2}, \frac{1}{2}\right].
\]

(i) Let $u^{\epsilon}$ be the unique solution to \eqref{eqn:ms}, and let $u$ be the unique solution to \eqref{eqn:eh} with $g\equiv0$, respectively. Then, for $\epsilon\in(0,1)$,
$$
u^{\epsilon}(0,t)-u(0,t)\geq\frac{1}{2}\epsilon t
$$
for all $t>0$.

(ii) Let $u^{\epsilon}$ be the unique solution to \eqref{discountepsilon}, and let $u$ be the unique solution to \eqref{discountbar}. Then, for $\lambda,\epsilon\in(0,1)$,
$$
u^{\epsilon}(0)-u(0)\geq \frac{\epsilon}{2\lambda}.
$$
\end{proposition}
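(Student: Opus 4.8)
\textbf{Proof strategy for Proposition \ref{exp:time}.}

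The plan is to exploit the explicit structure of the potential $W$ and the fact that the effective Lagrangian $\overline{L}_1$ of $H_1(y,p)=-W(y)+\frac{1}{2}p^2$ vanishes at $v=0$, exactly as in the proof of Proposition \ref{exp:nolip}. First I would record the two optimal control formulas. For $u$, since $\overline{H}(x,p)=-f(x)+\overline{H}_1(p)$ and $\overline{L}_1(0)=0$, taking the constant curve $\eta\equiv 0$ gives $u(0,t)\le \int_0^t f(0)\,ds=0$, and since $f\ge 0$ and $\overline{L}_1\ge 0$ we get $u(0,t)=0$ for all $t>0$; the same constant-curve argument with the discount factor gives $u(0)=0$ for part (ii). For $u^\epsilon$, after the rescaling $s\mapsto \epsilon s$ the formula reads
\[
u^\epsilon(0,t)=\inf\left\{\epsilon\int_0^{t/\epsilon}\Bigl(f(\epsilon\gamma(s))+W(\gamma(s))+\tfrac12|\dot\gamma(s)|^2\Bigr)ds:\ \gamma\in\mathrm{AC},\ \gamma(0)=0\right\},
\]
and similarly for \eqref{eqn:ocfuestatic} with the weight $e^{-\lambda s}$.

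The key step is a pointwise lower bound on the integrand. For any curve $\gamma$ starting at $0$, either $\gamma$ stays in $[-\tfrac12,\tfrac12]$ or it leaves; but leaving the interval only costs more since $W\ge 0$ outside and $f$ does not decrease, so I may as well treat the case $\gamma([0,t/\epsilon])\subset[-\tfrac12,\tfrac12]$ and note the other case is handled by restricting to a first exit time as in Proposition \ref{exp:nolip}. In the relevant case, for each $s$ I want to show
\[
f(\epsilon\gamma(s))+W(\gamma(s))\ \ge\ \tfrac12\epsilon.
\]
This follows because $f(\epsilon\gamma(s))=\epsilon|\gamma(s)|$ (as $|\epsilon\gamma(s)|\le \epsilon/2<1$) and $W(\gamma(s))=\tfrac12-|\gamma(s)|$, so the sum is $\epsilon|\gamma(s)|+\tfrac12-|\gamma(s)|=\tfrac12-(1-\epsilon)|\gamma(s)|\ge \tfrac12-(1-\epsilon)\cdot\tfrac12=\tfrac{\epsilon}{2}$. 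Dropping the nonnegative kinetic term then gives $u^\epsilon(0,t)\ge \epsilon\int_0^{t/\epsilon}\tfrac12\epsilon\,ds=\tfrac12\epsilon t$, which is part (i). (I should double-check the boundary case where $\gamma$ leaves $[-\tfrac12,\tfrac12]$: on the complement $W(\gamma(s))$ is not defined by the displayed formula, but by periodicity $W\ge 0$ everywhere and $f\ge 0$, so the integrand is nonnegative there and the contribution over the part of the curve inside $[-\tfrac12,\tfrac12]$ still dominates; a clean way is to argue that the infimum is attained by a curve staying in $[-\tfrac12,\tfrac12]$ since pushing outside only increases the cost.)

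For part (ii), the same pointwise bound $f(\epsilon\gamma(s))+W(\gamma(s))\ge\tfrac{\epsilon}{2}$ plugged into the discounted formula \eqref{eqn:ocfuestatic} after rescaling gives
\[
u^\epsilon(0)\ \ge\ \epsilon\int_0^\infty e^{-\lambda\epsilon s}\cdot\tfrac{\epsilon}{2}\,ds\ =\ \frac{\epsilon}{2\lambda},
\]
and combined with $u(0)=0$ this yields $u^\epsilon(0)-u(0)\ge\frac{\epsilon}{2\lambda}$. I do not expect a serious obstacle here; the only point requiring care is the bookkeeping for curves that exit $[-\tfrac12,\tfrac12]$, and that is dispatched by the monotonicity remark above together with nonnegativity of $W$ and $f$. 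The essential content is the elementary inequality $\tfrac12-(1-\epsilon)|y|\ge\tfrac{\epsilon}{2}$ for $|y|\le\tfrac12$, which exhibits the $\epsilon$-gap uniformly along the whole curve and hence produces the factor $t$ (resp.\ $1/\lambda$) upon integration.
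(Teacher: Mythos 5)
Your approach is essentially the same as the paper's: both reduce the bound to a pointwise lower bound on the potential along any admissible curve and then integrate, and both establish $u(0,t)=0$ (resp.\ $u(0)=0$) via $\overline{L}_1\geq 0$ with $\overline{L}_1(0)=0$. The paper phrases the key fact more cleanly, without rescaling: the function $x\mapsto f(x)+W(x/\epsilon)$ attains its minimum value $\tfrac{1}{2}\epsilon$ on all of $\mathbb{R}$, so the pointwise bound requires no case distinction.

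One part of your writeup is imprecise and, as stated, would not survive scrutiny: the treatment of curves that leave $[-\tfrac12,\tfrac12]$. Neither of your two fallback arguments is correct reasoning. The remark that ``the integrand is nonnegative there and the contribution over the part of the curve inside $[-\tfrac12,\tfrac12]$ still dominates'' gives at best a nonnegative contribution from the exterior portion, not the factor $\tfrac{\epsilon}{2}$ per unit time that you need, and the assertion that ``the infimum is attained by a curve staying in $[-\tfrac12,\tfrac12]$'' is not justified (and is not needed). The correct and simpler observation is that the pointwise bound $f(\epsilon\gamma(s))+W(\gamma(s))\geq\tfrac{\epsilon}{2}$ already holds for $|\gamma(s)|>\tfrac12$ too: there $f(\epsilon\gamma(s))=\min(\epsilon|\gamma(s)|,1)\geq\tfrac{\epsilon}{2}$ because $f$ is nondecreasing in $|x|$ and $\epsilon<1$, while $W\geq 0$ by definition. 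With that one-line addendum your pointwise bound holds for all $s$, and the rest of your argument (dropping the kinetic term, integrating against $1$ or against $e^{-\lambda\epsilon s}$) goes through and matches the paper.
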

\begin{proof}
We adopt the notations introduced in the proof of Proposition \ref{exp:nolip}. We give a proof of (i) and that of (ii) in order.

\medskip

(i) By the fact that $\overline{L}_1(v)\geq0$ for all $v\in\mathbb{R}$ and $\overline{L}_1(0)=0$, and by the optimal control formula of $u$,
\begin{equation*}
    \begin{aligned}
        u(x, t) = \inf \left\lbrace \int_0^t f\left(\eta(s)\right) +\overline{L}_1\left(\dot{\eta}(s)\right) ds : \eta\in \mathrm{AC}([0,t];\mathbb{R}),\ \eta(t) = x\right\rbrace,
    \end{aligned}
\end{equation*}
we conclude that $u(0,t)=0$ for all $t>0$.

On the other hand, note that the function $x\in\mathbb{R}\mapsto f(x)+W\left(\frac{x}{\varepsilon}\right)$ has its minimum value $\frac{1}{2}\epsilon$. Therefore, by the optimal control formula of $u^{\epsilon}$,
\begin{equation*}
    \begin{aligned}
        u^{\epsilon}(x, t) = \inf \left\lbrace \int_0^t f\left(\eta(s)\right)+ W\left(\frac{\eta(s)}{\epsilon}\right) +\frac{1}{2}|\dot{\eta}(s)|^2 ds : \eta\in \mathrm{AC}([0,t];\mathbb{R}),\ \eta(t) = x\right\rbrace,
    \end{aligned}
\end{equation*}
we see that $u^{\epsilon}(0,t)\geq\frac{1}{2}\epsilon t$.

\medskip

(ii) Due to the same reason, we see that $u(0)=0$. Also, by the optimal control formula of $u^{\epsilon}$,
\begin{align*}
u^\epsilon (x) &= \inf \left\lbrace \int_0^{\infty} e^{-\lambda s} \left(f\left(\eta(s)\right)+ W\left(\frac{\eta(s)}{\epsilon}\right) +\frac{1}{2}|\dot{\eta}(s)|^2\right)ds: \eta\in \mathrm{AC}([0,+\infty);\mathbb{R}^n),\ \eta(0)=x\right\rbrace\\
&\geq \frac{1}{2}\epsilon\int_0^{\infty} e^{-\lambda s} ds=\frac{\epsilon}{2\lambda}.
\end{align*}
This completes the proof.
\end{proof}

\section*{Appendix}

In this appendix, we prove Lemma \ref{lem:metric} with emphasis on the dependence on parameters. The goal is to show the constant $C>0$ appearing in the conclusion of the theorem is independent of the choice of $c\in\mathbb{R}^n$. Before we move into the proof, we set the following notation for convenience; for $c,x,y\in\mathbb{R}^n$ and $t>0$, we let
$$
m^c(t,x,y):=m^1_{c}(0,t,x,y)
$$
where the right-hand side $m^1_{c}(0,t,x,y)$ is as defined in Definition \ref{def:notation} with $\epsilon=1$.

First of all, from \cite[Lemma 3.2]{TranYu2022}, we see that for any $y\in\mathbb{R}^n,\ \epsilon,t>0$ with $|y|\leq M_0t$, there exists a constant $C=C(n,M_0,K_0)>0$ such that
$$
2m^c(t,0,y)\leq m^c(2t,0,2y)+C,
$$
which results in one direction of the conclusion, i.e.,
\[
m^{\epsilon}_c(0,t,a,b)\leq \overline{m}_c(0,t,a,b)+C \epsilon,
\]
for any $a, b \in \mathbb{R}^n,$ $\epsilon, t>0$ with $|b-a|\leq M_0t$. The independence of $C>0$ on $c\in\mathbb{R}^n$ is well shown by the argument of the proof of \cite[Lemma 3.2]{TranYu2022} together with \eqref{K_0H}, \eqref{K_0L}, which hold with a constant $K_0$ uniform in $c\in\mathbb{R}^n$ under the assumption (H1) in this paper. Thus, we skip the proof, and focus on the other direction instead.


Next, we show the other direction by verifying that for any $c\in\mathbb{R}^n$, and for any $y\in\mathbb{R}^n,\ \epsilon,t>0$ with $|y|\leq M_0t$, there exists a constant $C=C(n,M_0,K_0)>0$ such that
$$
m^c(2t,0,2y)\leq 2m^c(t,0,y)+C,
$$
which completes the proof of Lemma \ref{lem:metric} and also shows the independence of the constant on $c\in\mathbb{R}^n$.

\begin{lemma}
Assume {\rm(H1)-(H3)}. Let $M_0>0$, and let $K_0>0$ be a constant that satisfies in \eqref{K_0H}, \eqref{K_0L}. Then, there exists a constant $C=C(n,M_0,K_0)>0$ such that for any $c\in\mathbb{R}^n$, $t>0$ and any $y\in \mathbb{R}^n$ such that $|y|\leq M_0t$, we have
\[
m^c(2t,0,2y) \leq 2m^c(t,0,y)+C.
\]
\end{lemma}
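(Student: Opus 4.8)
The plan is to establish the subadditivity-type inequality $m^c(2t,0,2y)\le 2m^c(t,0,y)+C$ by exhibiting an admissible curve for the left-hand side that is built by concatenating two copies of a near-optimal curve for $m^c(t,0,y)$, then correcting for the periodicity mismatch of the oscillatory variable. Concretely, I would first fix $c\in\mathbb{R}^n$, $t>0$, $y$ with $|y|\le M_0 t$, and pick a curve $\xi\in\Gamma(0,t,0,y)$ that is (almost) optimal for $m^c(t,0,y)=m^1_c(0,t,0,y)$; since the Lagrangian $L^c(\cdot)=L(c,\cdot,\cdot)$ satisfies the quadratic bounds \eqref{K_0L} with $K_0$ uniform in $c$, such $\xi$ can be taken with $\|\dot\xi\|_{L^\infty}$ bounded by a constant depending only on $M_0,K_0$ (this is the analogue of Lemma \ref{lem:coercivity} for the fixed-$x$ Lagrangian, and it is exactly here that uniformity in $c$ enters). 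The naive candidate for the interval $[0,2t]$ is $\xi$ on $[0,t]$ followed by the shifted copy $t+s\mapsto y+\xi(s)$ on $[t,2t]$; this lands at $2y$ and has running cost exactly $2m^c(t,0,y)$ for the Lagrangian $L(c,\gamma(s),-\dot\gamma(s))$ — \emph{if} $y\in\mathbb{Z}^n$, because then the oscillatory argument $\gamma(s)=y+\xi(s-t)$ reduces mod $\mathbb{Z}^n$ to $\xi(s-t)$ and the second leg costs the same as the first.

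The actual work is the correction when $y\notin\mathbb{Z}^n$. Here I would insert a short bridging segment: choose $z\in\mathbb{Z}^n$ with $|y-z|\le \sqrt n$ (nearest lattice point), and reparametrize so that the curve on $[0,2t]$ runs $\xi$ to $y$, then a bridge from $y$ to $z$ over a unit time interval, then the shifted-by-$z$ copy of $\xi$ (run on a slightly compressed time interval so the total is still $2t$), ending at $z+y$, and finally a second unit-time bridge from $z+y$ back to $2y$. The two bridges each have length $|y-z|\le\sqrt n$ traversed in unit time, so by \eqref{K_0L} each contributes running cost at most $\tfrac12 n + K_0$, i.e. a constant $C=C(n,K_0)$. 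Compressing the two $\xi$-legs into time intervals of length $t-O(1)$ instead of $t$ changes their velocities by a multiplicative factor $1+O(1/t)$; using the quadratic upper bound on $L^c$ and the velocity bound $\|\dot\xi\|_\infty\le M(M_0,K_0)$, the extra cost from this time-rescaling is $O(M^2)\cdot O(1) = C(n,M_0,K_0)$, again a constant independent of $t$ (one should check the case of small $t$ separately, but for $t$ bounded below the estimate is uniform, and for $t$ small $|y|\le M_0t$ forces $y$ near $0$ so an even cruder direct bound applies). Taking the infimum over near-optimal $\xi$ and letting the approximation parameter go to zero yields $m^c(2t,0,2y)\le 2m^c(t,0,y)+C$ with $C=C(n,M_0,K_0)$; crucially every constant that appeared ($K_0$ in the quadratic bounds, $M$ in the velocity bound) depends on $c$ only through the uniform constant $K_0$ guaranteed by (H1), so $C$ is independent of $c$.

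The main obstacle I anticipate is handling the periodicity correction cleanly while keeping \emph{all} constants uniform in $c$ and independent of $t$: one must simultaneously (a) bridge the lattice gap $y\mapsto z$ in bounded time at bounded cost, (b) absorb the resulting time-budget deficit by rescaling the two main legs without the cost blowing up, and (c) verify that the small-$t$ regime (where $y$ is automatically small and the "two legs plus two bridges" decomposition degenerates) still gives the bound. Step (b) is where the quadratic growth \eqref{K_0L} and the velocity bound from the coercivity lemma are both essential, and it is the place a careless argument would pick up a $t$-dependent or $c$-dependent constant. Once this lemma is in hand, combining it with the reverse inequality quoted above from \cite[Lemma 3.2]{TranYu2022} (rescaled via $m^\epsilon_c(0,t,x,y)=\epsilon\, m^c(t/\epsilon, x/\epsilon, y/\epsilon)$ and $\overline m_c(0,t,x,y)=t\,\overline L(c,(y-x)/t)$, with $\overline L(c,\cdot)=\lim_{t\to\infty} m^c(t,0,\cdot\,)/t$ by the standard subadditive/superadditive argument) gives $|m^\epsilon_c(0,t,x,y)-\overline m_c(0,t,x,y)|\le C\epsilon$ and hence Lemma \ref{lem:metric}.
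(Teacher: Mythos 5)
Your overall strategy --- bridge the periodicity mismatch through a nearby lattice point, then absorb the time spent on the bridges by compressing the main leg(s) --- is the same strategy as the paper's, but the compression step is carried out differently, and that difference hides a genuine gap.

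The paper first reduces, via the obvious subadditivity $m^c(2t,0,2y)\le m^c(t,0,y)+m^c(t,y,2y)$, to showing $m^c(t,y,2y)\le m^c(t,0,y)+C$, and then compresses \emph{locally}: it takes a near-minimizer $\zeta$ for $m^c(t,0,y)$ (cost $\le m^c(t,0,y)+1$), uses a pigeonhole argument to find a subinterval $[d,d+\tfrac32]$ of fixed length $\tfrac32$ on which $\int L(c,\zeta,\dot\zeta)\le M_0^2+3K_0+1$, and only speeds the curve up by a factor $3$ on that one subinterval. Because the compressed piece has bounded length, the elementary two-sided quadratic bound \eqref{K_0L} alone gives $\tfrac13\int_d^{d+3/2}L(c,\zeta,3\dot\zeta)\,ds\le 3\int_d^{d+3/2}L(c,\zeta,\dot\zeta)\,ds + CK_0$, and every constant is $O(1)$ independent of $t$ and $c$. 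No pointwise bound on $\dot\zeta$ is ever needed.

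Your version instead compresses the whole leg(s) from duration $t$ to duration $t-O(1)$ and claims the extra cost is $O(1)$ ``using the quadratic upper bound on $L^c$ and the velocity bound.'' There are two problems here. First, the quadratic bounds \eqref{K_0L} alone do \emph{not} give an $O(1)$ rescaling error when applied uniformly on an interval of length $t$: writing $\beta=O(1/t)$ and computing $\tfrac{t-1}{t}\int_0^t L\left(c,\xi,-\tfrac{t}{t-1}\dot\xi\right)$ by replacing $L$ with $\tfrac{|v|^2}{2}\pm K_0$ produces leftover terms of order $K_0 t$, because the additive $\pm K_0$ does not scale with the velocity. To make uniform compression work you actually need the \emph{local Lipschitz in $v$} property of $L$ (which follows from convexity and \eqref{K_0L}), and to invoke that you need $\|\dot\xi\|_{L^\infty}$ bounded pointwise. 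This is the second problem: you cite ``the analogue of Lemma~\ref{lem:coercivity} for the fixed-$x$ Lagrangian,'' but Lemma~\ref{lem:coercivity} concerns minimizers of the Cauchy functional $u^\epsilon(x,t)$ with free endpoint and Lipschitz terminal cost $g$; it does not apply to the fixed two-endpoint problem $m^c(t,0,y)$, and an $L^\infty$ velocity bound there typically requires either smoothness of $L$ (Euler--Lagrange plus conservation of energy) or a separate truncation argument, neither of which is available under (H1)--(H3) alone. The paper's pigeonhole/local-compression route is designed precisely to avoid this issue: it never needs a pointwise velocity bound, only the easy integral bound $\int_0^t L(c,\zeta,\dot\zeta)\le(\tfrac12 M_0^2+K_0)t+1$ coming from comparing against the straight line. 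If you keep your ``two copies of $\xi$ plus two bridges'' decomposition, you should replace uniform compression by the pigeonhole step (find a cheap subinterval of unit length on each leg and steal the bridge time there), or alternatively establish the velocity bound by a mollification-in-$v$ argument; otherwise the rescaling error is not controlled.
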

\begin{proof}
Since $m^c(2t,0,2y) \leq m^c(t,0,y)+m^c(t,y,2y)$, it suffices to show that for some constant $C=C(n,M_0,K_0)>0$, it holds that
\[m^c(t,y,2y)\leq m^c(t,0,y)+C.\]

\begin{enumerate}
    \item If $t \leq 6$, then, by considering $\alpha:[0,t]\to\mathbb{R}^n$ defined by $\alpha(s)=y+\frac{s}{t}y$ for $s\in[0,t]$, we obtain, by \eqref{K_0L},
    \[
    m^c(t,y,2y)\leq \int_0^t \frac{1}{2}M_0^2+K_0 ds \leq 3M_0^2 + 6 K_0.
    \]
Also,
\[
m^c(t,0,y) \geq \int_0^t - K_0 ds\geq -6K_0.
\]
Hence, $m^c(t,y,2y) \leq m^c(t,0,y) +3M_0^2+12K_0$.
   \item If $t >6$, let $\zeta:[0,t]\to\mathbb{R}^n$ be an absolutely continuous curve with $\zeta(0)=0,\zeta(t)=y$ such that
   $$
   \int_0^tL(c,\zeta(s),\dot{\zeta}(s))ds\leq m^c(t,0,y)+1.
   $$
   By considering a straight line $\alpha:[0,t]\to\mathbb{R}^n$ defined by $\alpha(s)=\frac{s}{t}y$ for $s\in[0,t]$, we see that
   $$
   \int_0^tL(c,\zeta(s),\dot{\zeta}(s))ds\leq\int_0^t \frac{1}{2}M_0^2+K_0 ds+1 = \left(\frac{1}{2}M_0^2+K_0\right)t+1.
   $$

   We claim that there exists a number $d\in\left\{\frac{3}{2}k:0\leq k< \lfloor \frac{2}{3}t\rfloor\right\}$ such that
   $$
   \int_d^{d+\frac{3}{2}}L(c,\zeta(s),\dot{\zeta}(s))ds\leq M_0^2+3K_0+1.
   $$
   Otherwise, we would have
   $$
   \int_0^{\frac{3}{2}\lfloor\frac{2}{3}t\rfloor} L(c,\zeta(s),\dot{\zeta}(s))ds> \left\lfloor\frac{2}{3}t \right\rfloor\left(M_0^2+3K_0+1\right),
   $$
   which then lead to
   \begin{align*}
        \left(\frac{1}{2}M_0^2+K_0\right)t+1&\geq\int_0^{\frac{3}{2}\lfloor\frac{2}{3}t\rfloor}L(c,\zeta(s),\dot{\zeta}(s))ds+\int_{\frac{3}{2}\lfloor\frac{2}{3}t\rfloor}^{t}L(c,\zeta(s),\dot{\zeta}(s))ds\\
        &>\left(\frac{2}{3}t-1\right)\left(M_0^2+3K_0+1\right)-\left|t-\frac{3}{2}\left\lfloor\frac{2}{3}t\right\rfloor\right|K_0\\
        &>\left(\frac{2}{3}t-1\right)\left(M_0^2+3K_0+1\right)-3K_0.
   \end{align*}
   This is absurd for $t>6$.
   
   Let $w \in \mathbb{Z}^n$ such that $y-w \in [0,1]^n$. Define a new curve $\tilde{\zeta}:[0,t]\to \mathbb{R}^n$ by
   \begin{equation*}
   \tilde{\zeta}(s):=\left\{
       \begin{aligned}
        &w+\frac{\frac{1}{2}-s}{\frac{1}{2}}\left(y-w\right),\, \ \qquad \qquad \qquad \qquad 0\leq s\leq\frac{1}{2},\\
        &\zeta \left(s-\frac{1}{2}\right)+w, \qquad \qquad \qquad \qquad \qquad \frac{1}{2} \leq s \leq d+ \frac{1}{2},\\
        &\zeta \left(d+3\left(s-\left(d+\frac{1}{2}\right)\right)\right)+w,\, \ \ \qquad d+ \frac{1}{2}\leq s\leq d+1,\\
        &\zeta \left(s+\frac{1}{2}\right)+w, \qquad \qquad \qquad \qquad \qquad d+1 \leq s \leq t- \frac{1}{2},\\
        &y+w+\frac{s-t+\frac{1}{2}}{\frac{1}{2}} \left(y-w\right), \ \qquad \qquad \quad t-\frac{1}{2} \leq s \leq t.
       \end{aligned}
       \right.
   \end{equation*}
   Then, $m^c(t,y,2y)\leq\int_0^tL(c,\tilde{\zeta}(s),\dot{\tilde{\zeta}}(s))ds$ since $\tilde{\zeta}$ is an absolutely continuous curve from $y$ to $2y$. From the definition of $\tilde{\zeta}$, we see that
   $$
   \int_0^{\frac{1}{2}}L(c,\tilde{\zeta}(s),\dot{\tilde{\zeta}}(s))ds+\int_{t-\frac{1}{2}}^{t}L(c,\tilde{\zeta}(s),\dot{\tilde{\zeta}}(s))ds\leq 2n+K_0
   $$
   by \eqref{K_0L}, and that
   \begin{align*}
       \int_{\frac{1}{2}}^{d+\frac{1}{2}}L(c,\tilde{\zeta}(s),\dot{\tilde{\zeta}}(s))ds+\int_{d+1}^{t-\frac{1}{2}}L(c,\tilde{\zeta}(s),\dot{\tilde{\zeta}}(s))ds&=\int_0^dL(c,\zeta(s),\dot{\zeta}(s))ds+\int_{d+\frac{3}{2}}^{t}L(c,\zeta(s),\dot{\zeta}(s))ds\\
       &\leq m^c(t,0,y)+1-\int_d^{d+\frac{3}{2}}L(c,\zeta(s),\dot{\zeta}(s))ds\\
       &\leq m^c(t,0,y)+\frac{3}{2}K_0+1
   \end{align*}
    by the fact that $L(x,y,v)$ is periodic in $y$ and \eqref{K_0L} again. Finally, by the change of variables and by the choice of the number $d$, we get
    \begin{align*}
        \int_{d+\frac{1}{2}}^{d+1}L(c,\tilde{\zeta}(s),\dot{\tilde{\zeta}}(s))ds&=\frac{1}{3}\int_d^{d+\frac{3}{2}}L(c,\zeta(s),3\dot{\zeta}(s))ds\\
        &\leq\frac{1}{2}K_0+\frac{3}{2}\int_{d}^{d+\frac{3}{2}}\left|\dot{\zeta}(s)\right|^2ds\\
        &\leq\frac{7}{2}K_0+3\int_{d}^{d+\frac{3}{2}}L(c,\zeta(s),\dot{\zeta}(s))ds\leq 3M_0^2+\frac{25}{2}K_0+3.
    \end{align*}

    All in all, in the case when $t>6$, we see that there exists a constant $C=C(n,M_0,K_0)>0$ such that
    $$
    m^c(t,y,2y)\leq m^c(t,0,y)+C,
    $$
    and we complete the proof.
\end{enumerate}
\end{proof}

\section*{Acknowledgement}
The authors would like to thank Hung V. Tran for suggesting the problem, helpful conversations, and valuable advice.

\bibliographystyle{apalike}
\bibliography{ref}
\end{document}